\newcommand{\ZZ}{\mathbb{Z}}
\newcommand{\NNo}{\mathbb{N}_0}
\newcommand{\FF}{\mathbb{F}}
\newtheorem{theorem}{Theorem}[section]
\newtheorem{lemma}[theorem]{Lemma}
\newtheorem{proposition}[theorem]{Proposition}
\newtheorem{corollary}[theorem]{Corollary}
\theoremstyle{definition}
\newtheorem{definition}[theorem]{Definition}
\newtheorem{example}[theorem]{Example}
\newcommand{\vct}[1]{\mathbf{#1}}
\newcommand{\zint}{\,..\,}
\newcommand{\nth}{^{\text{th}}}
\DeclareMathOperator{\moddec}{mod}
\renewcommand{\mod}[1]{\,(\moddec #1)}
\newcommand{\clabel}[2]{\protected@write \@auxout {}{\string \newlabel {#1}{{#2}{\thepage}{#2}{#1}{}} }\hypertarget{#1}{}}
\newcommand{\lb}{\allowbreak}
\newcommand{\breaklist}[2][,\lb]{\def\nextitem{\def\nextitem{#1}}\renewcommand*{\do}[1]{\nextitem{##1}}\docsvlist{#2}}
\DeclareMathOperator{\cirdec}{circ}		
\DeclareMathOperator{\CIRdec}{CIRC}	
\newcommand{\cir}[1]{\cirdec(\breaklist{#1})}
\newcommand{\CIR}[1]{\CIRdec(\breaklist{#1})}
\newcommand{\floor}[1]{\lfloor#1\rfloor}
\DeclareMathOperator{\autdec}{Aut}
\newcommand{\aut}[1]{\autdec(#1)}
\renewcommand*\env@matrix[1][*\c@MaxMatrixCols c]{\hskip -\arraycolsep\let\@ifnextchar\new@ifnextchar\array{#1}}
\providecommand{\keywords}[1]{\small\textit{Keywords}: #1}
\providecommand{\msc}[1]{\small\textit{2020 MSC}: #1}
\title{New binary self-dual codes of lengths 80, 84 and 96 from composite matrices}
\author{J. Gildea, A. Korban and A. M. Roberts\\
Department of Mathematical and Physical Sciences\\
University of Chester\\
Thornton Science Park\\
Chester CH2 4NU\\
United Kingdom
}
\date{}
\begin{document}

\maketitle

\keywords{Self-dual codes, Group rings, Codes over rings, Best known codes}

\msc{94B05, 16S34, 15B10, 15B33}

\let\thefootnote\relax\footnote{E-mail addresses: \href{mailto:j.gildea@chester.ac.uk}{j.gildea@chester.ac.uk} (J. Gildea), \href{mailto:adrian3@windowslive.com}{adrian3@windowslive.com} (A. Korban), \href{mailto:adammichaelroberts@outlook.com}{adammichaelroberts@outlook.com} (A. M. Roberts)}

\begin{abstract}
In this work, we apply the idea of composite matrices arising from group rings to derive a number of different techniques for constructing self-dual codes over finite commutative Frobenius rings. By applying these techniques over different alphabets, we construct best known singly-even binary self-dual codes of lengths 80, 84 and 96 as well as doubly-even binary self-dual codes of length 96 that were not known in the literature before. 
\end{abstract}

\section{Introduction}

Self-dual codes form a family of widely studied linear codes which have many interesting properties and are intimately connected with many mathematical structures such as designs, lattices, modular forms and sphere packings. In recent history, much work has particularly been invested in developing techniques to construct extremal and optimal binary self-dual codes. The most famous of these techniques is quite possibly the pure double circulant construction, which utilises a generator matrix of the form $G=(I_n\,|\,A)$ where $I_n$ is the $n\times n$ identity matrix and $A$ is an $n\times n$ circulant matrix. It follows that $G$ is a generator matrix of a self-dual $[2n,n]$ code if and only if $AA^T=-I_n$. This technique has since been generalised by assuming a generator matrix of the form $G=(I_n\,|\,\sigma(v))$ where $\sigma$ is an isomorphism from a group ring to the ring of matrices which was introduced in \cite{R-022}. The isomorphism $\sigma$ is such that $G$ is a generator matrix of a self-dual $[2n,n]$ code if and only if $v$ is a unitary unit in the group ring. See \cite{R-096,R-111,R-100,R-113} for recent applications of this isomorphism in constructing self-dual codes.

In this work, we assume a generator matrix of the form $G=(I_n\,|\,\Omega(v))$ where $\Omega(v)$ is a matrix that arises from group rings which we call a composite matrix. It clearly follows that $(I_n\,|\,\Omega(v))$ is a generator matrix of a self-dual code if and only if $\Omega(v)\Omega(v)^T=-I_n$. The idea of composite matrices was first introduced in \cite{R-153} as a way of generalising the structure of $\sigma(v)$. The primary motivation for employing this technique is obtaining codes whose structures are atypical compared with those of codes constructed by more classical techniques. The main problem we face when attempting to construct codes with such a generator matrix is choosing parameters in such a way that allows for structural complexity of $\Omega(v)$ while also allowing for a reasonable set of necessary and sufficient conditions for the satisfaction of $\Omega(v)\Omega(v)^T=-I_n$.

Using generator matrices of the form $(I_n\,|\,\Omega(v))$ for a number of different composite matrices $\Omega(v)$, we find many self-dual codes with weight enumerator parameters of previously unknown values (relative to referenced sources). In total, 361 new codes are found, including 28 singly-even binary self-dual $[80,40,14]$ codes, 107 binary self-dual $[84,42,14]$ codes, 105 singly-even binary self-dual $[96,48,16]$ codes and 121 doubly-even binary self-dual $[96,48,16]$ codes.

The rest of the work is organised as follows. In Section 2, we give preliminary definitions on self-dual codes, Gray maps, circulant matrices and the alphabets we use. We also prove a few results concerning a simple matrix transformation, which we use in two of the composite matrix definitions. In Section 3, we define the composite matrices which we utilise in our constructions and we also prove the necessary and sufficient conditions needed by each construction to produce a self-dual code. In Section 4, we apply the constructions to obtain the new self-dual codes of length 80, 84 and 96 whose weight enumerator parameter values and automorphism group orders we detail. We also tabulate the results in this section. We finish with concluding remarks and discussion of possible expansion on this work.

\section{Preliminaries}

\subsection{Self-Dual Codes}

Let $R$ be a commutative Frobenius ring. Throughout this work, we always assume $R$ has unity. A code $\mathcal{C}$ of length $n$ over $R$ is a subset of $R^n$ whose elements are called codewords. If $\mathcal{C}$ is a submodule of $R^n$, then we say that $\mathcal{C}$ is linear. Let $\mathbf{x},\mathbf{y}\in R^n$ where $\mathbf{x}=(x_1,x_2,\dots,x_n)$ and $\mathbf{y}=(y_1,y_2,\dots,y_n)$. The (Euclidean) dual $\mathcal{C}^{\bot}$ of $\mathcal{C}$ is given by
	\begin{equation*}
	\mathcal{C}^{\bot}=\{\mathbf{x}\in R^n: \langle\mathbf{x},\mathbf{y}\rangle=0,\forall\mathbf{y}\in\mathcal{C}\},
	\end{equation*}	
where $\langle,\rangle$ denotes the Euclidean inner product defined by
	\begin{equation*}
	\langle\mathbf{x},\mathbf{y}\rangle=\sum_{i=1}^nx_iy_i.
	\end{equation*}

We say that $\mathcal{C}$ is self-orthogonal if $\mathcal{C}\subseteq \mathcal{C}^\perp$ and self-dual if $\mathcal{C}=\mathcal{C}^{\bot}$.

A binary self-dual code $\mathcal{C}$ is said to be \textit{doubly-even} (Type II), if all codewords $\vct{c}\in\mathcal{C}$ have weight $w(\vct{c})\equiv 0\mod{4}$, otherwise $\mathcal{C}$ is said to be \textit{singly-even} (Type I).

An upper bound on the minimum (Hamming) distance of a doubly-even binary self-dual code was given in \cite{R-116} and likewise for a singly-even binary self-dual code in \cite{R-115}. Let $d_{\text{I}}(n)$ and $d_{\text{II}}(n)$ be the minimum distance of a singly-even and doubly-even binary self-dual code of length $n$, respectively. Then
	\begin{equation*}
	d_{\text{II}}(n)\leq 4\floor{n/24}+4
	\end{equation*}
and
	\begin{equation*}
	d_{\text{I}}(n)\leq
	\begin{cases}
	4\floor{n/24}+2,& n\equiv 0\mod{24},\\
	4\floor{n/24}+4,& n\not\equiv 22\mod{24},\\
	4\floor{n/24}+6,& n\equiv 22\mod{24}.
	\end{cases}
	\end{equation*}

A self-dual code whose minimum distance meets its corresponding bound is called \textit{extremal}. A self-dual code with the highest possible minimum distance for its length is said to be \textit{optimal}. Extremal codes are necessarily optimal but optimal codes are not necessarily extremal. A \textit{best known} self-dual code is a self-dual code with the highest known minimum distance for its length.

\subsection{Alphabets}

In this paper, we consider the alphabets $\FF_2$, $\FF_2+u\FF_2$ and $\FF_4$.

Define
	\begin{equation*}
	\FF_2+u\FF_2=\{a+bu:a,b\in\FF_2,u^2=0\}.
	\end{equation*}

Then $\FF_2+u\FF_2$ is a commutative ring of order 4 and characteristic 2 such that $\FF_2+u\FF_2\cong\FF_2[u]/\langle u^2\rangle$. 

We define $\FF_4\cong\FF_2[\omega]/\langle \omega^2+\omega+1\rangle$ so that
	\begin{equation*}
	\FF_4=\{a{\omega}+b(1+\omega): a,b\in\FF_2,\omega^2+\omega+1=0\}.
	\end{equation*}

We recall the following Gray maps from \cite{R-117,R-118}:
	\begin{align*}
	\varphi_{\FF_2+u\FF_2}&:(\FF_2+u\FF_2)^n\to\FF_2^{2n}\\
        &\quad a+bu\mapsto(b,a+b),\,a,b\in\FF_2^{n},\\[6pt]
	\psi_{\FF_4}&:\FF_4^n\to\FF_2^{2n}\\
		&\quad a\omega+b(1+\omega)\mapsto(a,b),\,a,b\in\FF_2^n.
	\end{align*}

Note that these Gray maps preserve orthogonality in their respective alphabets. The Lee weight of a codeword is defined to be the Hamming weight of its binary image under any of the aforementioned Gray maps. A self-dual code in $R^n$ where $R$ is equipped with a Gray map to the binary Hamming space is said to be of Type II if the Lee weights of all codewords are multiples of 4, otherwise it is said to be of Type I. 
	\begin{proposition}\textup{(\cite{R-117})}\label{proposition-F2u}
		Let $\mathcal{C}$ be a code over $\FF_2+u\FF_2$. If $\mathcal{C}$ is self-orthogonal, then $\varphi_{\FF_2+u\FF_2}(\mathcal{C})$ is self-orthogonal. The code $\mathcal{C}$ is a Type I (resp. Type II) code over $\FF_2+u\FF_2$ if and only if $\varphi_{\FF_2+u\FF_2}(\mathcal{C})$ is a Type I (resp. Type II) code over $\FF_2$. The minimum Lee weight of $\mathcal{C}$ is equal to the minimum Hamming weight of $\varphi_{\FF_2+u\FF_2}(\mathcal{C})$.
	\end{proposition}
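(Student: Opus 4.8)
The plan is to reduce everything to a single computation relating the $\FF_2+u\FF_2$-inner product of two codewords to the binary inner product of their Gray images; for brevity abbreviate $\varphi_{\FF_2+u\FF_2}$ to $\varphi$. Write a generic pair of codewords as $\mathbf{x}=\mathbf{a}+u\mathbf{b}$ and $\mathbf{y}=\mathbf{c}+u\mathbf{d}$ with $\mathbf{a},\mathbf{b},\mathbf{c},\mathbf{d}\in\FF_2^n$, so that $\varphi(\mathbf{x})=(\mathbf{b},\mathbf{a}+\mathbf{b})$ and $\varphi(\mathbf{y})=(\mathbf{d},\mathbf{c}+\mathbf{d})$. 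Using $u^2=0$, the $R$-inner product splits into its $\FF_2$-components as $\langle\mathbf{x},\mathbf{y}\rangle=\langle\mathbf{a},\mathbf{c}\rangle+u(\langle\mathbf{a},\mathbf{d}\rangle+\langle\mathbf{b},\mathbf{c}\rangle)$, whereas expanding the binary inner product of the images and cancelling the repeated term $\langle\mathbf{b},\mathbf{d}\rangle$ (which appears twice and hence vanishes over $\FF_2$) gives $\langle\varphi(\mathbf{x}),\varphi(\mathbf{y})\rangle=\langle\mathbf{a},\mathbf{c}\rangle+\langle\mathbf{a},\mathbf{d}\rangle+\langle\mathbf{b},\mathbf{c}\rangle$. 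The crucial observation is that this binary value is exactly the sum of the two $\FF_2$-components of $\langle\mathbf{x},\mathbf{y}\rangle$.

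From this identity the self-orthogonality claim is immediate: if $\mathbf{x},\mathbf{y}\in\mathcal{C}$ and $\langle\mathbf{x},\mathbf{y}\rangle=0$ in $\FF_2+u\FF_2$, then both $\langle\mathbf{a},\mathbf{c}\rangle=0$ and $\langle\mathbf{a},\mathbf{d}\rangle+\langle\mathbf{b},\mathbf{c}\rangle=0$ in $\FF_2$, and therefore $\langle\varphi(\mathbf{x}),\varphi(\mathbf{y})\rangle=0$; ranging over all pairs shows that $\varphi(\mathcal{C})$ is self-orthogonal. For the weight statements I would simply invoke the definition of the Lee weight: since the Lee weight of any $\mathbf{x}$ is \emph{by definition} the Hamming weight of $\varphi(\mathbf{x})$, and since $\varphi$ is an $\FF_2$-linear bijection onto $\FF_2^{2n}$ sending $\mathbf{0}$ to $\mathbf{0}$, the multiset of Lee weights of $\mathcal{C}$ coincides with the multiset of Hamming weights of $\varphi(\mathcal{C})$. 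The equality of minimum weights follows at once, and the Type~I/Type~II equivalence follows because ``all Lee weights divisible by $4$'' and ``all Hamming weights of the image divisible by $4$'' are literally the same condition. To make the Type~I/II dichotomy well posed one also notes that when $\mathcal{C}$ is self-dual the cardinality $|\mathcal{C}|=|\FF_2+u\FF_2|^{n/2}=2^{n}$ is preserved by the bijection, so the self-orthogonal binary image of length $2n$ and size $2^{n}$ is in fact self-dual.

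I expect no serious obstacle: the only substantive step is the inner-product identity, and the delicate point there is purely arithmetic, namely keeping track of which cross terms survive modulo $2$. The term $\langle\mathbf{b},\mathbf{d}\rangle$ cancels, which is precisely why the binary pairing collapses onto the two $\FF_2$-components of the $R$-pairing; everything after that is definitional bookkeeping. The one place to phrase carefully is that preservation of self-orthogonality is a one-directional implication for each individual pair, so I would write the argument as ``$\langle\mathbf{x},\mathbf{y}\rangle=0$ implies $\langle\varphi(\mathbf{x}),\varphi(\mathbf{y})\rangle=0$'' rather than asserting an equivalence at the level of individual inner products.
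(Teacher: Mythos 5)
Your argument is correct. Note that the paper itself offers no proof of this proposition --- it is quoted verbatim from the cited reference \cite{R-117} --- so there is nothing internal to compare against; your computation $\langle\varphi(\mathbf{x}),\varphi(\mathbf{y})\rangle=\langle\mathbf{a},\mathbf{c}\rangle+\langle\mathbf{a},\mathbf{d}\rangle+\langle\mathbf{b},\mathbf{c}\rangle$ (the $\langle\mathbf{b},\mathbf{d}\rangle$ terms cancelling over $\FF_2$), showing that the binary pairing is the sum of the two $\FF_2$-components of the $\FF_2+u\FF_2$-pairing, is exactly the standard argument, and your remarks on the one-directional nature of the orthogonality implication and on the definitional character of the weight and Type~I/II statements are apt.
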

	\begin{proposition}\textup{(\cite{R-118})}\label{proposition-F4}
		Let $\mathcal{C}$ be a code over $\FF_4$. If $\mathcal{C}$ is self-orthogonal, then $\psi_{\FF_4}(\mathcal{C})$ is self-orthogonal. The code $\mathcal{C}$ is a Type I (resp. Type II) code over $\FF_4$ if and only if $\psi_{\FF_4}(\mathcal{C})$ is a Type I (resp. Type II) code over $\FF_2$. The minimum Lee weight of $\mathcal{C}$ is equal to the minimum Hamming weight of $\psi_{\FF_4}(\mathcal{C})$.
	\end{proposition}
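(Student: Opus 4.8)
The plan is to prove the three claims in turn, with an inner-product computation doing the heavy lifting for the self-orthogonality statement and the definition of the Lee weight making the last two claims almost immediate. Throughout, I would fix the $\FF_2$-basis $\{\omega,1+\omega\}$ of $\FF_4$ and write a pair of codewords as $\vct{x}=\vct{a}\omega+\vct{b}(1+\omega)$ and $\vct{y}=\vct{c}\omega+\vct{d}(1+\omega)$ with $\vct{a},\vct{b},\vct{c},\vct{d}\in\FF_2^n$, so that $\psi_{\FF_4}(\vct{x})=(\vct{a},\vct{b})$ and $\psi_{\FF_4}(\vct{y})=(\vct{c},\vct{d})$.

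For the self-orthogonality claim, the key step is to expand $\langle\vct{x},\vct{y}\rangle$ in $\FF_4$ using the relations $\omega^2=1+\omega$, $\omega(1+\omega)=1$ and $(1+\omega)^2=\omega$, and then re-express the result in the basis $\{\omega,1+\omega\}$ via $1=\omega+(1+\omega)$. A short calculation gives
\begin{equation*}
\langle\vct{x},\vct{y}\rangle=\bigl(\langle\vct{a},\vct{d}\rangle+\langle\vct{b},\vct{c}\rangle+\langle\vct{b},\vct{d}\rangle\bigr)\omega+\bigl(\langle\vct{a},\vct{c}\rangle+\langle\vct{a},\vct{d}\rangle+\langle\vct{b},\vct{c}\rangle\bigr)(1+\omega).
\end{equation*}
If $\vct{x},\vct{y}$ lie in a self-orthogonal code, then $\langle\vct{x},\vct{y}\rangle=0$, which forces both bracketed coefficients to vanish. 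Adding the two resulting scalar equations and using $2=0$ in $\FF_2$ collapses them to $\langle\vct{a},\vct{c}\rangle+\langle\vct{b},\vct{d}\rangle=0$, which is exactly $\langle\psi_{\FF_4}(\vct{x}),\psi_{\FF_4}(\vct{y})\rangle=0$. Since $\psi_{\FF_4}$ is an $\FF_2$-linear bijection, it carries an $\FF_4$-linear (hence $\FF_2$-linear) code to a binary linear code, and applying the identity to all pairs of codewords shows $\psi_{\FF_4}(\mathcal{C})$ is self-orthogonal.

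The minimum-weight claim is immediate from the stated definition of the Lee weight: the Lee weight of any $\vct{c}\in\FF_4^n$ is by definition the Hamming weight of $\psi_{\FF_4}(\vct{c})$, and since $\psi_{\FF_4}$ is a bijection fixing $\vct{0}$ it carries nonzero codewords bijectively to nonzero binary words, so the two minima coincide. The Type I/Type II claim follows the same way: ``Type II'' means every Lee weight is a multiple of $4$, equivalently (by the weight identity) every codeword of $\psi_{\FF_4}(\mathcal{C})$ has Hamming weight divisible by $4$, i.e.\ $\psi_{\FF_4}(\mathcal{C})$ is doubly-even, with Type I being the negation. When $\mathcal{C}$ is self-dual one also checks that $\psi_{\FF_4}(\mathcal{C})$ is self-dual by a cardinality count: self-orthogonality gives $\psi_{\FF_4}(\mathcal{C})\subseteq\psi_{\FF_4}(\mathcal{C})^\perp$, while $|\psi_{\FF_4}(\mathcal{C})|=|\mathcal{C}|=2^{n}$ is exactly the order of a self-dual binary code of length $2n$, forcing equality.

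I expect the one genuinely non-obvious point to be the self-orthogonality step, and specifically the observation that the two coefficients of $\langle\vct{x},\vct{y}\rangle$ individually do not reproduce the binary inner product $\langle\vct{a},\vct{c}\rangle+\langle\vct{b},\vct{d}\rangle$; it is only their sum, after the characteristic-$2$ cancellation of the cross terms $2\langle\vct{a},\vct{d}\rangle$ and $2\langle\vct{b},\vct{c}\rangle$, that yields the desired identity. Everything else reduces to bookkeeping with the $\FF_4$ multiplication table and the definition of the Gray map.
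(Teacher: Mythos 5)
The paper does not actually prove this proposition: it is quoted verbatim from the cited reference \cite{R-118}, so there is no internal proof to compare against. Your argument is correct and is the standard one. The expansion of $\langle\vct{x},\vct{y}\rangle$ in the basis $\{\omega,1+\omega\}$ checks out (using $\omega^2=1+\omega$, $(1+\omega)^2=\omega$, $\omega(1+\omega)=1$ and $1=\omega+(1+\omega)$), and summing the two vanishing coordinates does collapse, in characteristic $2$, to $\langle\vct{a},\vct{c}\rangle+\langle\vct{b},\vct{d}\rangle=0$, which is orthogonality of the Gray images. The weight and Type I/II claims are indeed immediate from the definition of Lee weight as the Hamming weight of the Gray image, and your cardinality count correctly handles the point that the binary image of a self-dual $\FF_4$-code is again self-dual, which is needed for the Type I/II statement to be well posed.
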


The next two corollaries follow directly from Propositions \ref{proposition-F2u} and \ref{proposition-F4}, respectively.
	\begin{corollary}
		Let $\mathcal{C}$ be a self-dual code over $\FF_2+u\FF_2$ of length $n$ and minimum Lee distance $d$. Then $\varphi_{\FF_2+u\FF_2}(\mathcal{C})$ is a binary self-dual $[2n,n,d]$ code. Moreover, the Lee weight enumerator of $\mathcal{C}$ is equal to the Hamming weight enumerator of $\varphi_{\FF_2+u\FF_2}(\mathcal{C})$. If $\mathcal{C}$ is a Type I (resp. Type II) code, then $\varphi_{\FF_2+u\FF_2}(\mathcal{C})$ is a Type I (resp. Type II) code.
	\end{corollary}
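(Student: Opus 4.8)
The plan is to derive this corollary directly from Proposition~\ref{proposition-F2u} by unpacking what self-duality over $\FF_2+u\FF_2$ transfers to under the Gray map $\varphi_{\FF_2+u\FF_2}$. The key observation is that a self-dual code is in particular self-orthogonal, so the first hypothesis of the proposition is met automatically, giving that $\varphi_{\FF_2+u\FF_2}(\mathcal{C})$ is self-orthogonal. To upgrade self-orthogonality to self-duality, I would track dimensions: since $\FF_2+u\FF_2$ is a commutative Frobenius ring of order $4$, a self-dual code $\mathcal{C}$ of length $n$ satisfies $|\mathcal{C}|=|R|^{n/2}=4^{n/2}=2^n$, and the Gray map is an injective $\FF_2$-linear map into $\FF_2^{2n}$, so $\varphi_{\FF_2+u\FF_2}(\mathcal{C})$ is a binary linear code of length $2n$ with $|\varphi_{\FF_2+u\FF_2}(\mathcal{C})|=2^n$, hence dimension exactly $n$. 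A self-orthogonal binary $[2n,n]$ code has dimension equal to half its length, which forces it to be self-dual.

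Next I would address the minimum distance and weight enumerator claims. The statement that $\varphi_{\FF_2+u\FF_2}(\mathcal{C})$ has minimum distance $d$ follows immediately from the final sentence of Proposition~\ref{proposition-F2u}, which equates the minimum Lee weight of $\mathcal{C}$ with the minimum Hamming weight of its image. For the weight enumerator equality, I would invoke the defining property of the Lee weight recorded just before the proposition: the Lee weight of a codeword is by definition the Hamming weight of its binary image. Since $\varphi_{\FF_2+u\FF_2}$ is a bijection from $\mathcal{C}$ onto $\varphi_{\FF_2+u\FF_2}(\mathcal{C})$ preserving this weight codeword-by-codeword, the multiset of Lee weights of $\mathcal{C}$ equals the multiset of Hamming weights of $\varphi_{\FF_2+u\FF_2}(\mathcal{C})$, so the two enumerators coincide term by term.

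Finally, the Type I/Type II assertion is a direct transcription of the middle sentence of Proposition~\ref{proposition-F2u}, which states the equivalence $\mathcal{C}$ is Type I (resp. Type II) if and only if $\varphi_{\FF_2+u\FF_2}(\mathcal{C})$ is; combined with the self-duality just established, this completes the identification of $\varphi_{\FF_2+u\FF_2}(\mathcal{C})$ as a binary self-dual code of the stated type. I expect the only genuine content here to be the dimension count that promotes self-orthogonality to self-duality; everything else is a repackaging of the proposition, so the main obstacle is merely making the cardinality argument precise and confirming that $\varphi_{\FF_2+u\FF_2}$ is $\FF_2$-linear and injective as the definition of the Gray map requires.
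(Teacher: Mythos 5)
Your proposal is correct and follows exactly the route the paper intends: the paper gives no explicit proof, stating only that the corollary ``follows directly'' from Proposition~\ref{proposition-F2u}, and your argument is the natural unpacking of that claim. The cardinality count promoting self-orthogonality to self-duality (using $|\mathcal{C}|=4^{n/2}=2^n$ and the injectivity and $\FF_2$-linearity of $\varphi_{\FF_2+u\FF_2}$) is precisely the detail the paper leaves implicit, and you have supplied it correctly.
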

	\begin{corollary}
		Let $\mathcal{C}$ be a self-dual code over $\FF_4$ of length $n$ and minimum Lee distance $d$. Then $\psi_{\FF_4}(\mathcal{C})$ is a binary self-dual $[2n,n,d]$ code. Moreover, the Lee weight enumerator of $\mathcal{C}$ is equal to the Hamming weight enumerator of $\psi_{\FF_4}(\mathcal{C})$. If $\mathcal{C}$ is a Type I (resp. Type II) code, then $\psi_{\FF_4}(\mathcal{C})$ is a Type I (resp. Type II) code.
	\end{corollary}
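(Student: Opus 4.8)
The plan is to derive every assertion from Proposition \ref{proposition-F4} together with the elementary fact that $\psi_{\FF_4}$ is an $\FF_2$-linear isomorphism. First I would note that, viewing $\FF_4$ as a two-dimensional vector space over $\FF_2$ with basis $\{\omega,1+\omega\}$, the map $a\omega+b(1+\omega)\mapsto(a,b)$ simply records the coordinate vector, so $\psi_{\FF_4}\colon\FF_4^n\to\FF_2^{2n}$ is an $\FF_2$-linear bijection. Consequently $\psi_{\FF_4}(\mathcal{C})$ is a binary linear code of length $2n$ and $|\psi_{\FF_4}(\mathcal{C})|=|\mathcal{C}|$.

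The one step that requires more than quoting the proposition is upgrading self-orthogonality of the image to self-duality. Since $\FF_4$ is a field and $\mathcal{C}$ is self-dual over $\FF_4$, we have $\dim_{\FF_4}\mathcal{C}=n/2$, hence $|\mathcal{C}|=4^{n/2}=2^n$ and $\dim_{\FF_2}\psi_{\FF_4}(\mathcal{C})=n$. Because $\mathcal{C}\subseteq\mathcal{C}^\perp$, Proposition \ref{proposition-F4} gives $\psi_{\FF_4}(\mathcal{C})\subseteq\psi_{\FF_4}(\mathcal{C})^\perp$; as the dimensions of a binary code of length $2n$ and its dual sum to $2n$, a self-orthogonal code of dimension $n$ must coincide with its dual. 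Thus $\psi_{\FF_4}(\mathcal{C})$ is a self-dual code, and it is therefore a $[2n,n]$ code.

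For the remaining claims I would appeal to the definition of the Lee weight as the Hamming weight of the Gray image. Proposition \ref{proposition-F4} states that the minimum Lee weight of $\mathcal{C}$ equals the minimum Hamming weight of $\psi_{\FF_4}(\mathcal{C})$, which is $d$ by hypothesis, giving the $[2n,n,d]$ parameters. Since $\psi_{\FF_4}$ restricts to a weight-preserving bijection from $\mathcal{C}$ onto $\psi_{\FF_4}(\mathcal{C})$ (each codeword's Lee weight is by definition the Hamming weight of its image), the two weight-enumerating sequences agree term by term, so the Lee weight enumerator of $\mathcal{C}$ equals the Hamming weight enumerator of $\psi_{\FF_4}(\mathcal{C})$. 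Finally, the Type I/Type II equivalence is exactly the middle assertion of Proposition \ref{proposition-F4} and so transfers immediately. The only genuine obstacle is the self-duality upgrade, which hinges on the cardinality count $|\mathcal{C}|=2^n$; without it one would conclude only self-orthogonality of the image.
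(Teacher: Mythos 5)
Your proof is correct and follows the same route the paper intends: the paper offers no written proof, stating only that the corollary ``follows directly'' from Proposition \ref{proposition-F4}, and your argument is precisely the standard filling-in of that claim. Your explicit dimension count upgrading self-orthogonality of $\psi_{\FF_4}(\mathcal{C})$ to self-duality is the one detail the paper leaves implicit, and you have identified and handled it correctly.
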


\subsection{Special Matrices}

We now recall the definitions and properties of some special matrices which we use in our work. We begin by defining a matrix transformation whose properties we utilise in some of the composite constructions we consider. The properties are easy to prove but we do so for completeness.
	\begin{proposition}\label{proposition-star}
		Let $A$ be $n\times n$ matrix over a commutative ring $R$. Let $\star:R^{n\times n}\to R^{n\times n}$ be the transformation such that $A^{\star}$ is defined to be the matrix obtained after circularly shifting the columns of $A$ to the right by one position. If
			\begin{align*}
				P=\begin{pmatrix}
					\vct{0} & I_{n-1}\\
					1 & \vct{0}
				\end{pmatrix},
			\end{align*}
		then $A^{\star}=AP$.
	\end{proposition}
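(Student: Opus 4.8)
The plan is to verify directly that right-multiplication by the permutation matrix $P$ realizes the circular right-shift of columns described by the $\star$ operation. The cleanest route is to compute the $(i,j)$ entry of the product $AP$ and check that it equals the $(i,j)$ entry of $A^{\star}$ for every pair of indices $i,j$. First I would write down explicitly what the columns of $A^{\star}$ are in terms of the columns of $A$: if $\vct{a}_1,\dots,\vct{a}_n$ denote the columns of $A$, then circularly shifting to the right by one position means the first column of $A^{\star}$ is $\vct{a}_n$ (the old last column wraps around to the front), and for $j\geq 2$ the $j\nth$ column of $A^{\star}$ is $\vct{a}_{j-1}$. So column $j$ of $A^{\star}$ is $\vct{a}_{j-1}$ with indices read modulo $n$.

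Next I would describe the entries of $P$ precisely so that the matrix product can be evaluated. From the block form, $P$ has a $1$ in row $i$, column $i+1$ for $i=1,\dots,n-1$ (the identity block $I_{n-1}$ sitting in the top-right), and a single $1$ in row $n$, column $1$ (the bottom-left entry), with all other entries zero. In other words, $P_{i,j}=1$ exactly when $j\equiv i+1\mod n$, and $P_{i,j}=0$ otherwise. This means each column of $P$ is a standard basis vector, so $P$ is a permutation matrix; the key observation is that column $j$ of $P$ has its single $1$ in row $j-1$ (indices modulo $n$).

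Then I would compute the product. Since column $j$ of $AP$ equals $A$ times column $j$ of $P$, and column $j$ of $P$ is the standard basis vector $\vct{e}_{j-1}$ (again reading $j-1$ modulo $n$, so that for $j=1$ this is $\vct{e}_n$), we get that column $j$ of $AP$ is exactly $\vct{a}_{j-1}$. This matches the description of column $j$ of $A^{\star}$ obtained in the first step, and since the indices agree for all $j=1,\dots,n$, we conclude $A^{\star}=AP$. Equivalently, one can phrase this entrywise: the $(i,j)$ entry of $AP$ is $\sum_{k=1}^{n}A_{i,k}P_{k,j}$, and because $P_{k,j}=1$ only for $k\equiv j-1\mod n$, this collapses to $A_{i,\,j-1}$, which is precisely the $(i,j)$ entry of the column-shifted matrix.

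I do not anticipate a genuine obstacle here, as the statement is essentially a bookkeeping identity about how a permutation matrix acts on columns; the only point requiring care is the wrap-around (modular) indexing at the boundary columns $j=1$ and the last column, so I would make sure the convention "column $j$ of $A^{\star}$ is $\vct{a}_{j-1}$ with $\vct{a}_0:=\vct{a}_n$" is stated unambiguously to avoid an off-by-one slip, and I would double-check that the bottom-left $1$ of $P$ correctly sends the old last column to the new first column.
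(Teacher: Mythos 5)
Your proof is correct. It takes a slightly different route from the paper's: the paper splits $A$ into a $2\times 2$ block decomposition $\left(\begin{smallmatrix}\vct{x} & z\\ X & \vct{y}^T\end{smallmatrix}\right)$ (isolating the last column) and checks by a single block multiplication that $AP=\left(\begin{smallmatrix}z & \vct{x}\\ \vct{y}^T & X\end{smallmatrix}\right)$, whereas you work entrywise, identifying $P_{i,j}=1$ exactly when $j\equiv i+1\pmod n$ and reading off that column $j$ of $AP$ is $A\vct{e}_{j-1}=\vct{a}_{j-1}$. Both are complete; the block argument is shorter and sidesteps modular indexing entirely, while your version has the advantage of making the wrap-around convention and the characterization of $P$ as the cyclic permutation matrix fully explicit --- which is precisely the description the paper relies on implicitly afterwards when it writes circulant matrices as $\sum_{i}a_iP^i$ and proves $(P^k)^T=P^{-k\bmod n}$. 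Your care about the boundary case $j=1$ (where column $1$ of $AP$ must be $\vct{a}_n$) is exactly the one point where an off-by-one error could creep in, and you handle it correctly.
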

	\begin{proof}
		Assume $A$ is an $n\times n$ matrix over a commutative ring $R$ where $n\geq 2$. Suppose we decompose $A$ into blocks such that
			\begin{align*}
				A=\begin{pmatrix}
					\vct{x} & z\\
					X & \vct{y}^T
				\end{pmatrix}
			\end{align*}
		where $\vct{x},\vct{y}\in R^{1\times(n-1)}$, $z\in R$ and $X\in R^{(n-1)\times(n-1)}$. Then by block-wise multiplication we obtain
			\begin{align*}
				AP=\begin{pmatrix}
					\vct{x} & z\\
					X & \vct{y}^T
				\end{pmatrix}
				\begin{pmatrix}
					\vct{0} & I_{n-1}\\
					1 & \vct{0}
				\end{pmatrix}=
				\begin{pmatrix}
					z & \vct{x}\\
					\vct{y}^T & X
				\end{pmatrix}
			\end{align*}	
		and so $AP$ corresponds to $A$ after circularly shifting its columns to the right by one position. Thus, $A^{\star}=AP$.
	\end{proof}

The matrix $P$ as defined in Proposition \ref{proposition-star} is a permutation matrix and is therefore orthogonal, i.e. $PP^T=I_n$. To see this, we have
	\begin{align*}
		P^T=\begin{pmatrix}
			\vct{0} & 1\\
			I_{n-1} & \vct{0}
		\end{pmatrix},
	\end{align*}
which corresponds to $P$ after circularly shifting its columns to the right by $n-2$ places and so by Proposition \ref{proposition-star} we have $P{}^T=PP^{n-2}=P^{n-1}$. Clearly, if we circularly shift the columns of $P{}^T=P^{n-1}$ to the right by one place we obtain $I_n$ so that $P{}^TP=P^{n-1}P=P^n=I_n$.

It also follows that $(P^k){}^T=P^{-k\mod{n}}$ for $k\in\NNo$. We can easily prove this by induction on $k\in\NNo$. The cases $k=0$ and $k=1$ are trivial. Assume $(P^k){}^T=P^{-k\mod{n}}$. Then we have $(P^{k+1}){}^T=(P^kP){}^T=P{}^T(P^k){}^T=P^{n-1}P^{-k\mod{n}}=P^{n-k-1\mod{n}}=P^{-(k+1)\mod{n}}$ which concludes our induction step.

We also have the following properties which are easy to prove.
	\begin{lemma}\label{lemma-star}
		Let $A$ and $B$ be $n\times n$ matrices over a commutative ring $R$ where $n\geq 2$ and let $\star$ be the transformation defined in Proposition \ref{proposition-star}.
		\begin{enumerate}
		\item[$(i)$]\clabel{lemma-star-1}{$(i)$} $(A+B)^{\star}=A^{\star}+B^{\star}$.
		\item[$(ii)$]\clabel{lemma-star-2}{$(ii)$} $AB{}^T=A^{\star}B^{\star T}$.
		\end{enumerate}
	\end{lemma}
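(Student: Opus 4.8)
The plan is to leverage the matrix identity established in Proposition \ref{proposition-star}, namely $A^{\star}=AP$ for the fixed permutation matrix $P$, together with the orthogonality relation $PP^T=I_n$ proved in the discussion immediately following that proposition. Once both transformations are expressed as right-multiplication by $P$, each claim reduces to elementary matrix algebra, so I expect no substantive obstacle; the only care required is in handling the transpose in part $(ii)$.

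For part $(i)$, I would simply write $(A+B)^{\star}=(A+B)P$ by Proposition \ref{proposition-star}, then invoke the distributivity of matrix multiplication over addition to get $(A+B)P=AP+BP$, and finally re-apply Proposition \ref{proposition-star} in the reverse direction to identify $AP=A^{\star}$ and $BP=B^{\star}$. This yields $(A+B)^{\star}=A^{\star}+B^{\star}$ in one line.

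For part $(ii)$, I would start from the right-hand side and compute $A^{\star}B^{\star T}=(AP)(BP)^T$, again using Proposition \ref{proposition-star}. The key manipulation is the transpose of a product, $(BP)^T=P^TB^T$, after which associativity gives $A^{\star}B^{\star T}=A\bigl(PP^T\bigr)B^T$. The orthogonality of $P$ established after Proposition \ref{proposition-star} supplies $PP^T=I_n$, so the middle factor collapses to the identity and we are left with $AB^T$, as required.

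The proof is essentially mechanical, so the only thing I would be vigilant about is citing the orthogonality of $P$ correctly: the relation $PP^T=I_n$ (rather than $P^TP=I_n$) is the one needed, since in part $(ii)$ the factor $P$ coming from $A^{\star}$ sits to the left of the factor $P^T$ coming from $(B^{\star})^T$. Both orderings hold for this $P$, but flagging the correct one keeps the argument transparent. No case analysis or induction is needed here, in contrast to the earlier verification that $(P^k)^T=P^{-k\bmod n}$.
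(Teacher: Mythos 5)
Your proposal is correct and follows exactly the paper's own argument: both parts reduce to the identity $A^{\star}=AP$ from Proposition \ref{proposition-star}, with part $(i)$ using distributivity and part $(ii)$ using $(BP)^T=P^TB^T$ together with $PP^T=I_n$. Your extra remark about which ordering of the orthogonality relation is needed is a nice touch but does not change the substance.
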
	
	\begin{proof}
		$(i)$. By Proposition \ref{proposition-star}, we have $(A+B)^{\star}=(A+B)P=AP+BP=A^{\star}+B^{\star}$.
		
		$(ii)$. By Proposition \ref{proposition-star} and the fact that $P$ is orthogonal, we have $A^{\star}B^{\star T}=AP(BP){}^T=APP{}^TB{}^T=A(I_n)B{}^T=AB{}^T$.
	\end{proof}

Let $\vct{a}=(a_0,a_1,\ldots,a_{n-1})\in R^n$ where $R$ is a commutative ring and let
	\begin{equation*}
	A=\begin{pmatrix}
	a_0 & a_1 & a_2 & \cdots & a_{n-1}\\
    a_{n-1} & a_0 & a_1 & \cdots & a_{n-2}\\
    a_{n-2} & a_{n-1} & a_0 & \cdots & a_{n-3}\\
	\vdots & \vdots & \vdots & \ddots & \vdots\\
	a_1 & a_2 & a_3 & \cdots & a_0
	\end{pmatrix}.
	\end{equation*}
	
Then $A$ is an $n\times n$ matrix called the \textit{circulant} matrix generated $\vct{a}$, denoted by $A=\cir{\vct{a}}$.

If $A=\cir{a_0,a_1,\ldots,a_{n-1}}$, then we see that $A=a_0I_n+a_1I_n^{\star}+a_2(I_n^{\star})^{\star}+\ldots$ and so on. Using Proposition \ref{proposition-star} and the properties of the matrix $P$, it follows that $A=\sum_{i=0}^{n-1}a_iP^i$. Clearly, the sum of any two circulant matrices is also a circulant matrix. If $B=\cir{\vct{b}}$ where $\vct{b}=(b_0,b_1,\ldots,b_{n-1})\in R^n$, then $AB=\sum_{i=0}^{n-1}\sum_{j=0}^{n-1}a_ib_jP^{i+j}$. Since $P^n=I_n$ there exist $c_k\in R$ such that $AB=\sum_{k=0}^{n-1}c_kP^k$ so that $AB$ is also circulant. In fact, it is true that
	\begin{equation*}
	c_{k}=\sum_{[i+j]_n=k}a_ib_j=\vct{x}_1\vct{y}_{k+1}
	\end{equation*}
for $k\in[0\zint n-1]$, where $\vct{x}_i$ and $\vct{y}_i$ respectively denote the $i\nth$ row and column of $A$ and $B$ and $[i+j]_n$ denotes the smallest non-negative integer such that $[i+j]_n\equiv i+j\mod{n}$. From this, we can see that circulant matrices commute multiplicatively. We also see that $A^T$ is circulant such that $A^T=\sum_{i=0}^{n-1}a_i(P^i)^T=\sum_{i=0}^{n-1}a_iP^{n-i}$. 
	\begin{lemma}\label{lemma-starcirc}
		Let $A$ be an $n\times n$ matrix over a commutative ring $R$ where $n\geq 2$ and let $\star$ be the transformation defined in Proposition \ref{proposition-star}. Let $B$ be an $n\times n$ circulant matrix over $R$.
		\begin{enumerate}
		\item[$(i)$]\clabel{lemma-starcirc-1}{$(i)$} $BP=PB$.
		\item[$(ii)$]\clabel{lemma-starcirc-2}{$(ii)$} $(AB{}^T)^{\star}=A^{\star}B{}^T$.
		\item[$(iii)$]\clabel{lemma-starcirc-2}{$(iii)$} $(AB^{\star T})^{\star}=AB{}^T$.
		\end{enumerate}
	\end{lemma}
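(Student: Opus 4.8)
The plan is to exploit the single structural fact, already recalled before the lemma, that a circulant matrix is a polynomial in the shift matrix $P$, together with the orthogonality of $P$ established above.

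For part $(i)$, since $B$ is circulant I would write $B=\sum_{i=0}^{n-1}b_iP^i$ for suitable $b_i\in R$. Each power $P^i$ commutes with $P$, so $BP=\sum_{i=0}^{n-1}b_iP^{i+1}=PB$; this is immediate. For part $(ii)$, I would first rewrite the left-hand side using Proposition \ref{proposition-star} as $(AB{}^T)^{\star}=(AB{}^T)P=AB{}^TP$. The key observation is that the transpose of a circulant matrix is again circulant (as recalled before the lemma), so $B{}^T$ is circulant and part $(i)$ applies to it, giving $B{}^TP=PB{}^T$. Substituting yields $AB{}^TP=A(PB{}^T)=(AP)B{}^T=A^{\star}B{}^T$, which is the claim.

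For part $(iii)$, the cleanest route is to reduce to the previous parts rather than compute afresh. Since $P$ is itself circulant and the product of circulant matrices is circulant, $B^{\star}=BP$ is circulant. Hence part $(ii)$, applied with $B^{\star}$ in place of $B$, gives $(A(B^{\star}){}^T)^{\star}=A^{\star}(B^{\star}){}^T=A^{\star}B^{\star T}$. Finally, Lemma \ref{lemma-star}$(ii)$ states $A^{\star}B^{\star T}=AB{}^T$, and combining the two equalities gives $(AB^{\star T})^{\star}=AB{}^T$. Alternatively, one can argue directly: $B^{\star T}=(BP){}^T=P{}^TB{}^T$, so $(AB^{\star T})^{\star}=AP{}^TB{}^TP=AP{}^T(PB{}^T)=A(P{}^TP)B{}^T=AB{}^T$, using $B{}^TP=PB{}^T$ and the orthogonality $P{}^TP=I_n$.

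I expect no serious obstacle here; the only thing to be careful about is invoking the already-established facts in the right order, namely that $B{}^T$ (and $B^{\star}$) are circulant, that circulants commute with $P$ by part $(i)$, and that $P$ is orthogonal. The mild subtlety worth flagging is recognising that $B^{\star}$ is circulant, which is precisely what lets part $(iii)$ be deduced cleanly from part $(ii)$ and Lemma \ref{lemma-star}$(ii)$.
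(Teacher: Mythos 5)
Your proof is correct and follows essentially the same route as the paper: parts $(i)$ and $(ii)$ are argued identically, and your ``alternative'' direct computation for $(iii)$ is precisely the paper's proof. Your preferred derivation of $(iii)$ --- noting that $B^{\star}=BP$ is circulant, applying $(ii)$ to it, and finishing with Lemma \ref{lemma-star}$(ii)$ --- is a valid minor variation that trades the explicit use of the orthogonality of $P$ for the already-established identity $A^{\star}B^{\star T}=AB^{T}$.
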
	
	\begin{proof}
		$(i)$. Let $B=\cir{b_0,b_1,\ldots,b_{n-1}}$. Then $B$ can be expressed as $B=\sum_{i=0}^{n-1}b_iP^i$ and so it is obvious that $BP=PB$.
		
		$(ii)$. Since $B$ is circulant, then $B^T$ is circulant and so by \ref{lemma-starcirc-1}, we have $(AB^T)^{\star}=(AB^T)P=A(B^TP)=(AP)B^T=A^{\star}B^T$.
		
		$(iii)$. Since $B$ is circulant, then $B^T$ is circulant and so by \ref{lemma-starcirc-1} and the fact that $P$ is orthogonal, we have $(AB^{\star T})^{\star}=(A(BP)^T)P=AP^TB^TP=AP^TPB^T=A(I_n)B^T=AB^T$.
	\end{proof}

Let $J_n$ be an $n\times n$ matrix over $R$ whose $(i,j)\nth$ entry is $1$ if $i+j=n+1$ and 0 if otherwise. Then $J_n$ is called the $n\times n$ exchange matrix and corresponds to the row-reversed (or column-reversed) version of $I_n$. Note that $[i+j]_n$ corresponds to the $(i+1,j+1)\nth$ entry of the matrix $J_nV$ where $V=\cir{n-1,0,1,\ldots,n-2}$ for $i,j\in[0\zint n-1]$.

Let $A_0,A_1,\ldots,A_{k-1}$ be $m\times n$ matrices over $R$ and let
	\begin{equation*}
	X=\begin{pmatrix}
	A_0 & A_1 & A_2 & \cdots & A_{k-1}\\
    A_{k-1} & A_0 & A_1 & \cdots & A_{k-2}\\
    A_{k-2} & A_{k-1} & A_0 & \cdots & A_{k-3}\\
	\vdots & \vdots & \vdots & \ddots & \vdots\\
	A_1 & A_2 & A_3 & \cdots & A_0
	\end{pmatrix}.
	\end{equation*}
	
Then $X$ is an $km\times kn$ matrix called the \textit{block circulant} matrix generated $A_0,A_1,\ldots,A_{k-1}$, denoted by $X=\CIR{A_0,A_1,\ldots,A_{k-1}}$.
\subsection{Group Rings and Composite Matrices}

In this section, we recall the basic definition of a finite group ring and proceed to define the concept of a composite matrix.

Let $G$ be a finite group order $n$ and let $R$ be a finite commutative Frobenius ring. Let $RG=\{\sum_{i=1}^n\alpha_{g_i}g_i:\alpha_{g_i}\in R,g_i\in G\}$ and define addition in $RG$ by
	\begin{align*}
		\sum_{i=1}^n\alpha_{g_i}g_i+\sum_{i=1}^n\beta_{g_i}g_i=\sum_{i=1}^n(\alpha_{g_i}+\beta_{g_i})g_i
	\end{align*}
and define multiplication in $RG$ by
	\begin{align*}
		\sum_{i=1}^n\alpha_{g_i}g_i\cdot\sum_{j=1}^n\beta_{g_j}g_j=\sum_{k=1}^n\left(\sum\nolimits_{i,j:g_ig_j=g_k}\alpha_{g_i}\beta_{g_j}\right)g_k.
	\end{align*}

Then $RG$ is called the group ring of $G$ over $R$ and is a ring with respect to the aforementioned definitions of addition and multiplication.

Let $(g_1,g_2,\ldots,g_n)$ be a fixed listing of the elements of $G$ with $g_1=1$ and let $v=\sum_{i=1}^n\alpha_{g_i}g_i\in RG$. Define $\sigma(v)$ to be the $n\times n$ matrix whose $(i,j)\nth$ entry is $\alpha_{g_k}$ where $g_k=g_i^{-1}g_j$ for $i,j\in[1\zint n]$, i.e.
	\begin{align*}
		\sigma(v)=\begin{pmatrix}
			\alpha_{g_1^{-1}g_1} & \alpha_{g_1^{-1}g_2} & \cdots & \alpha_{g_1^{-1}g_n}\\
			\alpha_{g_2^{-1}g_1} & \alpha_{g_2^{-1}g_2} & \cdots & \alpha_{g_2^{-1}g_n}\\
			\vdots & \vdots & \ddots & \vdots\\
			\alpha_{g_n^{-1}g_1} & \alpha_{g_n^{-1}g_2} & \cdots & \alpha_{g_n^{-1}g_n}
		\end{pmatrix}.
	\end{align*}
	
The matrix $\sigma(v)$ was first given in \cite{R-022} wherein it was proved that $\sigma$ is an isomorphism from the ring $RG$ to $R^{n\times n}$. 

Suppose now that $n>1$ is composite and let $r$ be a fixed integer such that $r\mid n:1<r<n$ and let $m=n/r$. Let $\{H_1,H_2,\ldots,H_{\eta}\}$ be a collection of $\eta$ groups of order $r$. Let $H_t$ be a representative of one of these groups for $t\in[1\zint\eta]$ and let $(h_{t:1},h_{t:2},\ldots,h_{t:r})$ be a fixed listing of the elements of $H_t$ with $h_{t:1}=1$. Let $H'$ be an $m\times m$ matrix whose $(y,z)\nth$ entry is $h_{y,z}'\in[1\zint\eta]$ for $y,z\in[1\zint m]$ and let $P'$ be an $m\times m$ matrix whose $(y,z)\nth$ entry is $p_{y,z}'\in\FF_2$ for $y,z\in[1\zint m]$. Define the mapping $\varrho(y,z,i,j)=g_{r(y-1)+i}^{-1}g_{r(z-1)+j}$ for $y,z\in[1\zint m]$ and $i,j\in[1\zint r]$.

Define $Z_{y,z}$ to be the $r\times r$ matrix whose $(i,j)\nth$ entry is given by
	\begin{align*}
		z_{y,z|i,j}=\alpha_{\varrho(y,z,i,j)}
	\end{align*}	
and define $Z'_{t:y,z}$ to be the $r\times r$ matrix whose $(i,j)\nth$ entry is given by
	\begin{align*}
		z_{t:y,z|i,j}'=\alpha_{\varrho(y,z,1,\mathcal{M}_{H_t}(i,j))},
	\end{align*}	
where $\mathcal{M}_{H_t}(i,j)$ is the $(i,j)\nth$ entry of the matrix of integers $\ell\in[1\zint r]$ such that $h_{t:\ell}=h_{t:i}^{-1}h_{t:j}$.

Define $\Omega(v)$ to be the block matrix whose $(y,z)\nth$ block entry is given by
	\begin{align*}
		\omega_{y,z}=\begin{cases}
		Z_{y,z},&p_{y,z}'=0,\\
		Z_{h_{y,z}':y,z}',&p_{y,z}'=1.
		\end{cases}
	\end{align*}

Then $\Omega(v)$ is an $n\times n$ matrix composed of $m^2$ blocks of size $r\times r$ which we call the \textit{composite} $(G,H_1,H_2,\ldots,H_{\eta})$-\textit{matrix of} $v\in RG$ with respect to $H'$ and $P'$. If $P'=\vct{0}$ (i.e. the $m\times m$ zero matrix), the matrix $\Omega(v)$ reduces to $\sigma(v)$. 

The concept of composite matrices defined in this way was first introduced in \cite{R-153} as a way of generalising the structure of $\sigma(v)$. See \cite{R-151,R-101,R-158} for recent applications of composite matrices in constructing binary self-dual codes.
	\begin{example}
	Let $G\cong D_4\cong\langle a,b\mid a^4=b^2=1,bab=a^{-1}\rangle$ with the fixed listing $G=(g_{4j+i+1})=a^ib^j$ for $i\in[0\zint 3]$ and $j\in[0\zint 1]$. Then $n=8$ and suppose we let $r=4\mid n$ so that $m=n/r=2$. Let $\{H_1,H_2\}$ be a collection of groups of order $r=4$. Let $H_1\cong C_2\times C_2\cong\langle c,d\mid c^2=d^2=1,cd=dc\rangle$ with the fixed listing $H_1=(h_{1:2j+i+1})=c^id^j$ for $i\in[0\zint 1]$ and $j\in[0\zint 1]$. Let $H_2\cong C_{2\cdot 2}\cong\langle e\mid e^{2\cdot 2}=1\rangle$ with the fixed listing $H_2=(h_{2:2j+i+1})=e^{2i+j}$ for $i\in[0\zint 1]$ and $j\in[0\zint 1]$. Let
		\begin{align*}
		    H'=\begin{pmatrix}
		    1 & 2\\
		    2 & 1
		    \end{pmatrix}
		\end{align*}	
	and let $P'=\vct{1}$ (i.e. the $2\times 2$ matrix of ones).	Let $v=\sum_{i=1}^{8}\alpha_{g_i}g_i\in RG$ and let $\Omega(v)$ be the composite $(G,H_1,H_2)$-matrix of $v\in RG$ with respect to $H'$ and $P'$. We have
		\begin{align*}
		    \Omega(v)=
		    \begin{pmatrix}
		    \omega_{1,1} & \omega_{1,2}\\
		    \omega_{2,1} & \omega_{2,2}
		    \end{pmatrix}
		    =
		    \begin{pmatrix}
		    Z_{h_{1,1}':1,1}' & Z_{h_{1,2}':1,2}'\\
		    Z_{h_{2,1}':2,1}' & Z_{h_{2,2}':2,2}'
		    \end{pmatrix}
		    =
		    \begin{pmatrix}
		    Z_{1:1,1}' & Z_{2:1,2}'\\
		    Z_{2:2,1}' & Z_{1:2,2}'
		    \end{pmatrix}
		\end{align*}
	and we also find that
		\begin{align*}
		    \mathcal{M}_{H_1}=\begin{pmatrix}
			1 & 2 & 3 & 4\\
			2 & 1 & 4 & 3\\
			3 & 4 & 1 & 2\\
			4 & 3 & 2 & 1
		    \end{pmatrix}\quad\text{and}\quad
		    \mathcal{M}_{H_2}=\begin{pmatrix}
			1 & 2 & 3 & 4\\
			2 & 1 & 4 & 3\\
			4 & 3 & 1 & 2\\
			3 & 4 & 2 & 1
		    \end{pmatrix}.
		\end{align*}
	
	By definition, the $(i,j)\nth$ entry of $Z_{1:1,1}'$ is given by $\alpha_{\varrho(1,1,1,\mathcal{M}_{H_1}(i,j))}$ where $\varrho(1,1,1,\mathcal{M}_{H_1}(i,j))=g_{1}^{-1}g_{\mathcal{M}_{H_1}(i,j)}=g_{\mathcal{M}_{H_1}(i,j)}$ so that
		\begin{align*}
		    Z_{1:1,1}'=
		    \begin{pmatrix}
				\alpha_{g_{1}} & \alpha_{g_{2}} & \alpha_{g_{3}} & \alpha_{g_{4}}\\
				\alpha_{g_{2}} & \alpha_{g_{1}} & \alpha_{g_{4}} & \alpha_{g_{3}}\\
				\alpha_{g_{3}} & \alpha_{g_{4}} & \alpha_{g_{1}} & \alpha_{g_{2}}\\
				\alpha_{g_{4}} & \alpha_{g_{3}} & \alpha_{g_{2}} & \alpha_{g_{1}}
		    \end{pmatrix}
		\end{align*}
	and similarly we find that
		\begin{align*}
		    Z_{2:1,2}'&=
		    \begin{pmatrix}
				\alpha_{g_{5}} & \alpha_{g_{6}} & \alpha_{g_{7}} & \alpha_{g_{8}}\\
				\alpha_{g_{6}} & \alpha_{g_{5}} & \alpha_{g_{8}} & \alpha_{g_{7}}\\
				\alpha_{g_{8}} & \alpha_{g_{7}} & \alpha_{g_{5}} & \alpha_{g_{6}}\\
				\alpha_{g_{7}} & \alpha_{g_{8}} & \alpha_{g_{6}} & \alpha_{g_{5}}
		    \end{pmatrix},\\[4pt]
		    Z_{2:2,1}'&=
		    \begin{pmatrix}
				\alpha_{g_{5}} & \alpha_{g_{8}} & \alpha_{g_{7}} & \alpha_{g_{6}}\\
				\alpha_{g_{8}} & \alpha_{g_{5}} & \alpha_{g_{6}} & \alpha_{g_{7}}\\
				\alpha_{g_{6}} & \alpha_{g_{7}} & \alpha_{g_{5}} & \alpha_{g_{8}}\\
				\alpha_{g_{7}} & \alpha_{g_{6}} & \alpha_{g_{8}} & \alpha_{g_{5}}
		    \end{pmatrix},\\[4pt]
		    Z_{1:2,2}'&=
		    \begin{pmatrix}
				\alpha_{g_{1}} & \alpha_{g_{4}} & \alpha_{g_{3}} & \alpha_{g_{2}}\\
				\alpha_{g_{4}} & \alpha_{g_{1}} & \alpha_{g_{2}} & \alpha_{g_{3}}\\
				\alpha_{g_{3}} & \alpha_{g_{2}} & \alpha_{g_{1}} & \alpha_{g_{4}}\\
				\alpha_{g_{2}} & \alpha_{g_{3}} & \alpha_{g_{4}} & \alpha_{g_{1}}
		    \end{pmatrix}.
		\end{align*}
	
	Therefore, we obtain
		\begin{align*}
		    \Omega(v)=
		    \begin{pmatrix}
		    Z_{1:1,1}' & Z_{2:1,2}'\\
		    Z_{2:2,1}' & Z_{1:2,2}'
		    \end{pmatrix}=
		    \begin{pmatrix}
		    A_1 & A_2 & B_1 & B_2\\
		    A_2 & A_1 & B_2J_2 & B_1\\
		    C_1 & C_2 & D_1 & D_2\\
		    C_2J_2 & C_1 & D_2 & D_1
		    \end{pmatrix}
		\end{align*}
	where $A_1=\cir{\alpha_{g_{1}},\alpha_{g_{2}}}$, $A_2=\cir{\alpha_{g_{3}},\alpha_{g_{4}}}$, $B_1=\cir{\alpha_{g_{5}},\alpha_{g_{6}}}$, $B_2=\cir{\alpha_{g_{7}},\alpha_{g_{8}}}$, $C_1=\cir{\alpha_{g_{5}},\alpha_{g_{8}}}$, $C_2=\cir{\alpha_{g_{7}},\alpha_{g_{6}}}$ and $D_1=\cir{\alpha_{g_{1}},\alpha_{g_{4}}}$, $D_2=\cir{\alpha_{g_{3}},\alpha_{g_{2}}}$.
	\end{example}

\section{Composite Matrix Constructions}
In this section, we present our constructions which assume a generator matrix of the form $(I_n\,|\,\Omega(v))$ where $\Omega(v)$ is a composite matrix. For each construction, we first define the structure of the corresponding composite matrix $\Omega(v)$ and subsequently prove the conditions that hold if and only if $(I_n\,|\,\Omega(v))$ is a generator matrix of a self-dual $[2n,n]$ code over $R$. We will hereafter assume that $R$ is a finite commutative Frobenius ring of characteristic 2. For each $v=\sum_{i=1}^n\alpha_{g_i}g_i\in RG$ that we define, we denote $\vct{v}=(v_1,v_2,\ldots,v_n)=(\alpha_{g_1},\alpha_{g_2},\ldots,\alpha_{g_n})$ where $\vct{v}_i$ denotes $v_i=\alpha_{g_i}$ for $i\in[1\zint n]$. We also use the following notation
	\begin{align*}
		\vct{v}_{i:j}=\begin{cases}
			(v_i,v_{i+1},v_{i+2},\ldots,v_{j-1},v_j),& i<j,\\
			(v_i,v_{i-1},v_{i-2},\ldots,v_{j+1},v_j),& i>j,
		\end{cases}
	\end{align*}
for $i,j\in[1\zint n]$. We also let $\cir{\vct{u},\vct{v}}$ denote $\cir{u_1,u_2,\ldots,u_n,v_1,v_2,\ldots,v_n}$ for any $\vct{u},\vct{v}\in R^n$ such that $\vct{u}=(u_1,u_2,\ldots,u_n)$ and $\vct{v}=(v_1,v_2,\ldots,v_n)$.
	\begin{definition}\label{definition-Omega_20_1}
	Let $G\cong D_{10}\cong\langle a,b\mid a^{10}=b^2=1,bab=a^{-1}\rangle$ with the fixed listing $G=(g_{10j+i+1})=a^ib^j$ for $i\in[0\zint 9]$ and $j\in[0\zint 1]$. Let $H\cong D_{5}\cong\langle c,d\mid c^{5}=d^2=1,dcd=c^{-1}\rangle$ with the fixed listing $H=(h_{5j+i+1})=a^ib^j$ for $i\in[0\zint 4]$ and $j\in[0\zint 1]$. Let $H'=\vct{1}$ and $P'=\vct{1}$. Let $v=\sum_{i=1}^{20}{\alpha_{g_i}}g_i\in RG$. If $\Omega_{1}^{20}(v)$ is the composite $(G,H)$-matrix of $v\in RG$ with respect to $H'$ and $P'$, then
		\begin{align*}
			\Omega_{1}^{20}(v)=\begin{pmatrix}
				A_1 & B_1 & C_1 & D_1\\
				B_1^T & A_1^T & D_1^T & C_1^T\\
				C_2 & D_2 & A_2 & B_2\\
				D_2^T & C_2^T & B_2^T & A_2^T
			\end{pmatrix},
		\end{align*}
	where $A_1=\cir{\vct{v}_{1:5}}$, $B_1=\cir{\vct{v}_{6:10}}$, $C_1=\cir{\vct{v}_{11:15}}$, $D_1=\cir{\vct{v}_{16:20}}$, $A_2=\cir{\vct{v}_1,\vct{v}_{10:7}}$, $B_2=\cir{\vct{v}_{6:2}}$, $C_2=\cir{\vct{v}_{11},\vct{v}_{20:17}}$ and $D_2=\cir{\vct{v}_{16:12}}$.
	\end{definition}
	\begin{theorem}\label{theorem-Omega_20_1}
	Let $G=(I\,|\,\Omega_{1}^{20}(v))$ where $\Omega_{1}^{20}(v)$ is as defined in Definition \ref{definition-Omega_20_1}. Then $G$ is a generator matrix of a self-dual $[40,20]$ code over $R$ if and only if 
		\begin{empheq}[left=\empheqlbrace]{align*}
			A_1A_1^T+B_1B_1^T+C_1C_1^T+D_1D_1^T&=I_{5},\\
			A_2A_2^T+B_2B_2^T+C_2C_2^T+D_2D_2^T&=I_{5},\\
			A_1C_2^T+B_1D_2^T+C_1A_2^T+D_1B_2^T&=\vct{0},\\
			A_1D_2+B_1C_2+C_1B_2+D_1A_2&=\vct{0}.
		\end{empheq}
	\end{theorem}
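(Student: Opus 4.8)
The plan is to invoke the self-duality criterion recorded in the introduction: since $R$ has characteristic $2$, the matrix $G=(I\,|\,\Omega_{1}^{20}(v))$ generates a self-dual $[40,20]$ code over $R$ if and only if $\Omega_{1}^{20}(v)\Omega_{1}^{20}(v)^T=-I_{20}=I_{20}$. The whole proof therefore reduces to computing the product $M:=\Omega_{1}^{20}(v)\Omega_{1}^{20}(v)^T$ block by block and showing that the single equation $M=I_{20}$ is equivalent to the four displayed conditions. Throughout I would lean on two facts established in Section 2: each of the eight blocks $A_1,B_1,C_1,D_1,A_2,B_2,C_2,D_2$ is a $5\times 5$ circulant matrix, so any two of them commute, and the transpose of a circulant is again circulant.

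Viewing $\Omega_{1}^{20}(v)$ as a $4\times 4$ array of $5\times 5$ blocks, the $(i,j)$ block of $M$ is $M_{ij}=\sum_{k=1}^4\Omega_{ik}\Omega_{jk}^T$, the ``inner product'' of block-rows $i$ and $j$. Because $M$ is symmetric, its blocks satisfy $M_{ij}=M_{ji}^T$, so it suffices to treat the ten blocks on and above the block-diagonal. First I would read off the two diagonal blocks $M_{11}=A_1A_1^T+B_1B_1^T+C_1C_1^T+D_1D_1^T$ and $M_{33}=C_2C_2^T+D_2D_2^T+A_2A_2^T+B_2B_2^T$, whose equality to $I_5$ is exactly the first two displayed conditions, while the off-diagonal blocks $M_{13}$ and $M_{14}$ expand directly to the left-hand sides of the third and fourth conditions.

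The remaining six blocks I expect to collapse onto these four equations. For $M_{22}$ and $M_{44}$ the summands have the form $X^TX$ with $X$ circulant, so $X^TX=XX^T$ and these blocks coincide with $M_{11}$ and $M_{33}$, hence are governed by conditions one and two. For $M_{23}$ and $M_{24}$, pulling the transpose outside each product and commuting the circulant factors identifies them with the transposes of the left-hand sides of the fourth and third conditions, so they vanish precisely when those conditions hold. Finally $M_{12}=A_1B_1+B_1A_1+C_1D_1+D_1C_1$ and $M_{34}=C_2D_2+D_2C_2+A_2B_2+B_2A_2$ vanish unconditionally, since for commuting circulants $XY+YX=2XY=\vct{0}$ in characteristic $2$.

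Assembling these observations yields both directions simultaneously: $M=I_{20}$ forces the four displayed equations, since they are literally $M_{11},M_{33},M_{13},M_{14}$, and conversely the four equations make every block of $M$ equal to $I_5$ on the diagonal and $\vct{0}$ off it. The only genuine work, and the place to be careful, is the bookkeeping of the preceding paragraph: one must verify that the transpose-and-commute manipulations really reduce the six ``extra'' blocks to the four stated conditions rather than introducing independent constraints. The characteristic-$2$ identity together with the mutual commutativity of the $5\times5$ circulants are exactly what make this reduction go through.
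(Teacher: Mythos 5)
Your proposal is correct and follows essentially the same route as the paper: both reduce self-duality to $\Omega_{1}^{20}(v)\Omega_{1}^{20}(v)^T=I_{20}$ and then compute the product block-by-block, using that the $5\times5$ blocks are circulant (hence mutually commuting, with circulant transposes) and that the characteristic is $2$, so that the sixteen block equations collapse to the four stated conditions. The paper simply records the resulting block form of the product without spelling out the collapsing of the redundant blocks, which you do explicitly.
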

	\begin{proof}
		 We know that $G$ is a generator matrix of a self-dual $[40,20]$ code over $R$ if and only if $\Omega_{1}^{20}(v)\Omega_{1}^{20}(v)^T=I_{20}$. We find that
			\begin{align*}
				\Omega_{1}^{20}(v)\Omega_{1}^{20}(v)^T=\begin{pmatrix}
					X_1 & \vct{0} & Y_1 & Y_2\\
					\vct{0} & X_1 & Y_2^T & Y_1^T\\
					Y_1^T & Y_2 & X_2 & \vct{0}\\
					Y_2^T & Y_1 & \vct{0} & X_2
				\end{pmatrix},
			\end{align*}
		where
			\begin{align*}
				X_1&=A_1A_1^T+B_1B_1^T+C_1C_1^T+D_1D_1^T,\\
				X_2&=A_2A_2^T+B_2B_2^T+C_2C_2^T+D_2D_2^T
			\end{align*}
		and
			\begin{align*}
				Y_1&=A_1C_2^T+B_1D_2^T+C_1A_2^T+D_1B_2^T,\\
				Y_2&=A_1D_2+B_1C_2+C_1B_2+D_1A_2.
			\end{align*}
		
		Clearly, $Y_i=\vct{0}$ if and only if $Y_i^T=\vct{0}$ for $i\in[1\zint 2]$. Thus, $\Omega_{1}^{20}(v)\Omega_{1}^{20}(v)^T=I_{20}$ if and only if
			\begin{empheq}[left=\empheqlbrace]{align*}
				X_1=X_2&=I_5,\\
				Y_1=Y_2&=\vct{0}.
			\end{empheq}
	\end{proof}
	
	\begin{definition}\label{definition-Omega_20_2}
	Let $G\cong C_5\times C_4\cong\langle a,b\mid a^5=b^4=1,ab=ba\rangle$ with the fixed listing $G=(g_{5j+i+1})=a^ib^j$ for $i\in[0\zint 4]$ and $j\in[0\zint 3]$. Let $H\cong D_{5}\cong\langle c,d\mid c^{5}=d^2=1,dcd=c^{-1}\rangle$ with the fixed listing $H=(h_{5j+i+1})=a^ib^j$ for $i\in[0\zint 4]$ and $j\in[0\zint 1]$. Let $H'=\vct{1}$ and $P'=\vct{1}$. Let $v=\sum_{i=1}^{20}{\alpha_{g_i}}g_i\in RG$. If $\Omega_{2}^{20}(v)$ is the composite $(G,H)$-matrix of $v\in RG$ with respect to $H'$ and $P'$, then
		\begin{align*}
			\Omega_{2}^{20}(v)=\begin{pmatrix}
				A & B & C & D\\
				B^T & A^T & D^T & C^T\\
				C & D & A & B\\
				D^T & C^T & B^T & A^T 	
			\end{pmatrix},
		\end{align*}
	where $A=\cir{\vct{v}_{1:5}}$, $B=\cir{\vct{v}_{6:10}}$, $C=\cir{\vct{v}_{11:15}}$ and $D=\cir{\vct{v}_{16:20}}$.
	\end{definition}
	\begin{theorem}\label{theorem-Omega_20_2}
	Let $G=(I\,|\,\Omega_{2}^{20}(v))$ where $\Omega_{2}^{20}(v)$ is as defined in Definition \ref{definition-Omega_20_2}. Then $G$ is a generator matrix of a self-dual $[40,20]$ code over $R$ if and only if 
		\begin{empheq}[left=\empheqlbrace]{align*}
			AA^T+BB^T+CC^T+DD^T&=I_5,\\
			AC^T+BD^T+CA^T+DB^T&=\vct{0}.
		\end{empheq}
	\end{theorem}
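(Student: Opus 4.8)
The plan is to invoke the criterion stated at the opening of Section 3: since $R$ has characteristic $2$, the matrix $G=(I\,|\,\Omega_{2}^{20}(v))$ generates a self-dual $[40,20]$ code if and only if $\Omega_{2}^{20}(v)\Omega_{2}^{20}(v)^T=I_{20}$. Writing $M=\Omega_{2}^{20}(v)$ for brevity, I would first record the transpose $M^T$ block by block; transposing the $4\times 4$ array of blocks both swaps block positions and transposes each block, and since $A,B,C,D$ are circulant their transposes are again circulant. The whole argument then reduces to computing the sixteen $5\times 5$ blocks of $MM^T$ and identifying each of them.

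The computation rests on three facts recalled earlier: $A,B,C,D$ and their transposes are all circulant; circulant matrices commute under multiplication; and $R$ has characteristic $2$, so $N+N=\vct{0}$ for any matrix $N$. With these in hand, I would evaluate each block as the product of a block-row of $M$ with a block-column of $M^T$. The ``mismatched'' off-diagonal blocks, in positions $(1,2),(1,4),(2,1),(2,3),(3,2),(3,4),(4,1),(4,3)$, yield sums of the form $AB+BA$ or $A^TD^T+D^TA^T$, each of which vanishes by commutativity together with characteristic $2$. Every diagonal block collapses to $X:=AA^T+BB^T+CC^T+DD^T$, using $A^TA=AA^T$ and the analogous identities. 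The four remaining blocks, in positions $(1,3),(2,4),(3,1),(4,2)$, each collapse to $Y:=AC^T+BD^T+CA^T+DB^T$; for the lower two positions this requires rewriting entries such as $B^TD$ as $DB^T$ via commutativity before recognising the common value $Y$ (and noting $Y^T=Y$).

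Assembling these gives
	\begin{align*}
		MM^T=\begin{pmatrix}
			X & \vct{0} & Y & \vct{0}\\
			\vct{0} & X & \vct{0} & Y\\
			Y & \vct{0} & X & \vct{0}\\
			\vct{0} & Y & \vct{0} & X
		\end{pmatrix},
	\end{align*}
so that $MM^T=I_{20}$ holds exactly when $X=I_5$ and $Y=\vct{0}$, which is precisely the asserted system. I expect no genuine obstacle here: the argument is a direct parallel of the proof of Theorem \ref{theorem-Omega_20_1}, simplified by the fact that the abelian group $C_5\times C_4$ produces identical blocks in the two diagonal bands, so that a single $X$ and a single $Y$ suffice in place of the pairs $X_1,X_2$ and $Y_1,Y_2$ appearing there. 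The only point demanding care is the bookkeeping of the transpose pattern and the systematic use of commutativity to merge terms such as $B^TD$ and $DB^T$; once that is handled, the cancellations are forced.
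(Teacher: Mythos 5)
Your proposal is correct and follows essentially the same route as the paper: both reduce the claim to $\Omega_{2}^{20}(v)\Omega_{2}^{20}(v)^T=I_{20}$ and compute the product blockwise, arriving at the block circulant $\CIR{X,\vct{0},Y,\vct{0}}$ with $X=AA^T+BB^T+CC^T+DD^T$ and $Y=AC^T+BD^T+CA^T+DB^T$. The only difference is that you spell out the cancellations via circulant commutativity and characteristic $2$, which the paper leaves implicit.
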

	\begin{proof}
		 We know that $G$ is a generator matrix of a self-dual $[40,20]$ code over $R$ if and only if $\Omega_{2}^{20}(v)\Omega_{2}^{20}(v)^T=I_{20}$. We find that
			\begin{align*}
				\Omega_{2}^{20}(v)\Omega_{2}^{20}(v)^T=\cir{X,\vct{0},Y,\vct{0}},
			\end{align*}
		where
			\begin{align*}
				X=AA^T+BB^T+CC^T+DD^T
			\end{align*}
		and
			\begin{align*}
				Y=AC^T+BD^T+CA^T+DB^T.
			\end{align*}
			
		Thus, $\Omega_{2}^{20}(v)\Omega_{2}^{20}(v)^T=I_{20}$ if and only if
			\begin{empheq}[left=\empheqlbrace]{align*}
				X&=I_5,\\
				Y&=\vct{0}.
			\end{empheq}
	\end{proof}

	\begin{definition}\label{definition-Omega_42_1}
	Let $G\cong D_{21}\cong\langle a,b\mid a^{21}=b^2=1,bab=a^{-1}\rangle$ with the fixed listing $G=(g_{21j+i+1})=a^ib^j$ for $i\in[0\zint 20]$ and $j\in[0\zint 1]$. Let $H\cong C_7\times C_3\cong\langle c,d\mid c^7=d^3=1,cd=dc\rangle$ with the fixed listing $H=(h_{7j+i+1})=a^ib^j$ for $i\in[0\zint 6]$ and $j\in[0\zint 2]$. Let $H'=\vct{1}$ and $P'=\vct{1}$. Let $v=\sum_{i=1}^{42}{\alpha_{g_i}}g_i\in RG$. If $\Omega_{1}^{42}(v)$ is the composite $(G,H)$-matrix of $v\in RG$ with respect to $H'$ and $P'$, then
		\begin{align*}
			\Omega_{1}^{42}(v)=\begin{pmatrix}
				\CIR{A_1,A_2,A_3} & \CIR{B_1,B_2,B_3}\\
				\CIR{C_1,C_2,C_3} & \CIR{D_1,D_2,D_3}	
			\end{pmatrix},
		\end{align*}
	where $A_1=\cir{\vct{v}_{1:7}}$, $A_2=\cir{\vct{v}_{8:14}}$, $A_3=\cir{\vct{v}_{15:21}}$, $B_1=\cir{\vct{v}_{22:28}}$, $B_2=\cir{\vct{v}_{29:35}}$, $B_3=\cir{\vct{v}_{36:42}}$, $C_1=\cir{\vct{v}_{22},\vct{v}_{42:37}}$, $C_2=\cir{\vct{v}_{36:30}}$, $C_3=\cir{\vct{v}_{29:23}}$, $D_1=\cir{\vct{v}_1,\vct{v}_{21:16}}$, $D_2=\cir{\vct{v}_{15:9}}$ and $D_2=\cir{\vct{v}_{8:2}}$.
	\end{definition}
	\begin{theorem}\label{theorem-Omega_42_1}
	Let $G=(I\,|\,\Omega_{1}^{42}(v))$ where $\Omega_{1}^{42}(v)$ is as defined in Definition \ref{definition-Omega_42_1}. Then $G$ is a generator matrix of a self-dual $[84,42]$ code over $R$ if and only if 
		\begin{empheq}[left=\empheqlbrace]{align*}
			A_1A_1^T+A_2A_2^T+A_3A_3^T+B_1B_1^T+B_2B_2^T+B_3B_3^T&=I_7,\\
			C_1C_1^T+C_2C_2^T+C_3C_3^T+D_1D_1^T+D_2D_2^T+D_3D_3^T&=I_7,\\
			A_1A_3^T+A_2A_1^T+A_3A_2^T+B_1B_3^T+B_2B_1^T+B_3B_2^T&=\vct{0},\\
			A_1C_1^T+A_2C_2^T+A_3C_3^T+B_1D_1^T+B_2D_2^T+B_3D_3^T&=\vct{0},\\
			A_1C_3^T+A_2C_1^T+A_3C_2^T+B_1D_3^T+B_2D_1^T+B_3D_2^T&=\vct{0},\\
			A_1C_2^T+A_2C_3^T+A_3C_1^T+B_1D_2^T+B_2D_3^T+B_3D_1^T&=\vct{0},\\
			C_1C_3^T+C_2C_1^T+C_3C_2^T+D_1D_3^T+D_2D_1^T+D_3D_2^T&=\vct{0}.
		\end{empheq}
	\end{theorem}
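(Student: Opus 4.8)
The plan is to verify directly that $\Omega_{1}^{42}(v)\Omega_{1}^{42}(v)^T=I_{42}$, which (since $R$ has characteristic $2$, so $-I_{42}=I_{42}$) is equivalent to $G=(I\,|\,\Omega_{1}^{42}(v))$ generating a self-dual $[84,42]$ code. I would write $\Omega_{1}^{42}(v)$ as the $2\times 2$ array of $21\times 21$ block-circulant matrices $M_{11}=\CIR{A_1,A_2,A_3}$, $M_{12}=\CIR{B_1,B_2,B_3}$, $M_{21}=\CIR{C_1,C_2,C_3}$ and $M_{22}=\CIR{D_1,D_2,D_3}$, each assembled from $7\times 7$ circulant blocks, so that
\[
\Omega_{1}^{42}(v)\Omega_{1}^{42}(v)^T=\begin{pmatrix} M_{11}M_{11}^T+M_{12}M_{12}^T & M_{11}M_{21}^T+M_{12}M_{22}^T\\ M_{21}M_{11}^T+M_{22}M_{12}^T & M_{21}M_{21}^T+M_{22}M_{22}^T\end{pmatrix}.
\]

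First I would record two structural facts about these $3$-block-circulant matrices. Transposition acts by $\CIR{X_1,X_2,X_3}^T=\CIR{X_1^T,X_3^T,X_2^T}$, and the product of two of them is again block-circulant: $\CIR{X_1,X_2,X_3}\,\CIR{Y_1,Y_2,Y_3}$ has generating blocks $X_1Y_1+X_2Y_3+X_3Y_2$, $X_1Y_2+X_2Y_1+X_3Y_3$ and $X_1Y_3+X_2Y_2+X_3Y_1$, the block analogue of the circulant convolution discussed before Lemma \ref{lemma-starcirc}. Each of the four entries of the product above is a sum of two such products, hence itself a $21\times 21$ block-circulant matrix, so $\Omega_{1}^{42}(v)\Omega_{1}^{42}(v)^T$ is completely determined by the three $7\times 7$ generating blocks of each of its four entries.

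Numbering the stated equations $(1)$--$(7)$ from top to bottom, I would then compute those generating blocks. For the $(1,1)$ entry $M_{11}M_{11}^T+M_{12}M_{12}^T$ the three blocks are $\sum_i A_iA_i^T+\sum_i B_iB_i^T$, then $A_1A_3^T+A_2A_1^T+A_3A_2^T+B_1B_3^T+B_2B_1^T+B_3B_2^T$, and a third block which is exactly the transpose of the second. Setting this entry equal to $I_{21}=\CIR{I_7,\vct{0},\vct{0}}$ therefore yields $(1)$ and $(3)$, the third block vanishing automatically once the second does. The $(2,2)$ entry gives $(2)$ and $(7)$ by the identical computation with $C,D$ in place of $A,B$. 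The $(1,2)$ entry $M_{11}M_{21}^T+M_{12}M_{22}^T$ has no internal transpose symmetry, so requiring its three generating blocks to equal $\vct{0}$ produces the three independent conditions $(4)$, $(5)$ and $(6)$. Finally, since $\Omega_{1}^{42}(v)\Omega_{1}^{42}(v)^T$ is symmetric, its $(2,1)$ entry is the transpose of its $(1,2)$ entry and so vanishes automatically, contributing nothing new.

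The computation is mechanical once the multiplication and transposition rules are in hand; the only real care needed is bookkeeping, namely matching the convolution pattern of indices and tracking which transposes land where, so that the seven surviving equations emerge in exactly the stated form. The one genuinely conceptual point, which trims a naive count of twelve generating-block equations down to seven, is recognising the two redundancies: the third generating block of each diagonal entry is the transpose of its second (absorbing a would-be extra equation into each of $(3)$ and $(7)$), and the whole $(2,1)$ entry is the transpose of the $(1,2)$ entry.
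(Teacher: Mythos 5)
Your proposal is correct and follows essentially the same route as the paper: both reduce the self-duality condition to $\Omega_{1}^{42}(v)\Omega_{1}^{42}(v)^T=I_{42}$, compute the product as a $2\times 2$ array whose entries are the block-circulant matrices $\CIRdec(X_1,Y_1,Y_1^T)$, $\CIRdec(Y_2,Y_3,Y_4)$, $\CIRdec(Y_2^T,Y_4^T,Y_3^T)$, $\CIRdec(X_2,Y_5,Y_5^T)$, and then discard the redundant transposed blocks to arrive at the seven stated conditions. Your explicit statement of the transposition and convolution rules for $\CIRdec$ simply spells out the bookkeeping the paper leaves implicit behind ``we find that''.
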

	\begin{proof}
		We know that $G$ is a generator matrix of a self-dual $[84,42]$ code over $R$ if and only if $\Omega_{1}^{42}(v)\Omega_{1}^{42}(v)^T=I_{42}$. We find that
			\begin{align*}
				\Omega_{1}^{42}(v)\Omega_{1}^{42}(v)^T=\begin{pmatrix}
					\CIR{X_1,Y_1,Y_1^T} & \CIR{Y_2,Y_3,Y_4}\\
					\CIR{Y_2^T,Y_4^T,Y_3^T} & \CIR{X_2,Y_5,Y_5^T}
				\end{pmatrix},
			\end{align*}
		where
			\begin{align*}
				X_1&=A_1A_1^T+A_2A_2^T+A_3A_3^T+B_1B_1^T+B_2B_2^T+B_3B_3^T,\\
				X_2&=C_1C_1^T+C_2C_2^T+C_3C_3^T+D_1D_1^T+D_2D_2^T+D_3D_3^T
			\end{align*}
		and
			\begin{align*}
				Y_1&=A_1A_3^T+A_2A_1^T+A_3A_2^T+B_1B_3^T+B_2B_1^T+B_3B_2^T,\\
				Y_2&=A_1C_1^T+A_2C_2^T+A_3C_3^T+B_1D_1^T+B_2D_2^T+B_3D_3^T,\\
				Y_3&=A_1C_3^T+A_2C_1^T+A_3C_2^T+B_1D_3^T+B_2D_1^T+B_3D_2^T,\\
				Y_4&=A_1C_2^T+A_2C_3^T+A_3C_1^T+B_1D_2^T+B_2D_3^T+B_3D_1^T,\\
				Y_5&=C_1C_3^T+C_2C_1^T+C_3C_2^T+D_1D_3^T+D_2D_1^T+D_3D_2^T.
			\end{align*}
			
		Clearly, $Y_i=\vct{0}$ if and only if $Y_i^T=\vct{0}$ for $i\in[1\zint 5]$. Thus, $\Omega_{1}^{42}(v)\Omega_{1}^{42}(v)^T=I_{42}$ if and only if
			\begin{empheq}[left=\empheqlbrace]{align*}
				X_1=X_2&=I_7,\\
				Y_1=Y_2=Y_3=Y_4=Y_5&=\vct{0}.
			\end{empheq}
	\end{proof}

	\begin{definition}\label{definition-Omega_42_2}
	Let $G\cong D_{21}\cong\langle a,b\mid a^{21}=b^2=1,bab=a^{-1}\rangle$ with the fixed listing $G=(g_{21j+i+1})=a^ib^j$ for $i\in[0\zint 20]$ and $j\in[0\zint 1]$. Let $H\cong C_3\times C_7\cong\langle c,d\mid c^3=b^7=1,cd=dc\rangle$ with the fixed listing $H=(h_{3j+i+1})=c^id^j$ for $i\in[0\zint 2]$ and $j\in[0\zint 6]$. Let $H'=\vct{1}$ and $P'=\vct{1}$. Let $v=\sum_{i=1}^{42}{\alpha_{g_i}}g_i\in RG$. If $\Omega_{2}^{42}(v)$ is the composite $(G,H)$-matrix of $v\in RG$ with respect to $H'$ and $P'$, then
		\begin{align*}
			\Omega_{2}^{42}(v)=\begin{pmatrix}
				A_1 & A_2 & A_3 & B_1 & B_2 & B_3\\
				A_3^{\star} & A_1 & A_2 & B_3^{\star} & B_1 & B_2\\
				A_2^{\star} & A_3^{\star} & A_1 & B_2^{\star} & B_3^{\star} & B_1\\
				C_1 & C_2 & C_3 & D_1 & D_2 & D_3\\
				C_3^{\star} & C_1 & C_2 & D_3^{\star} & D_1 & D_2\\
				C_2^{\star} & C_3^{\star} & C_1& D_2^{\star} & D_3^{\star} & D_1
			\end{pmatrix},
		\end{align*}
	where $A_1=\cir{\vct{v}_{1:7}}$, $A_2=\cir{\vct{v}_{8:14}}$, $A_3=\cir{\vct{v}_{15:21}}$, $B_1=\cir{\vct{v}_{22:28}}$, $B_2=\cir{\vct{v}_{29:35}}$, $B_3=\cir{\vct{v}_{36:42}}$, $C_1=\cir{\vct{v}_{22},\vct{v}_{42:37}}$, $C_2=\cir{\vct{v}_{36:30}}$, $C_3=\cir{\vct{v}_{29:23}}$, $D_1=\cir{\vct{v}_{1},\vct{v}_{21:16}}$, $D_2=\cir{\vct{v}_{15:9}}$, $D_2=\cir{\vct{v}_{8:2}}$ and $\star$ is the transformation defined in Proposition \ref{proposition-star}.
	\end{definition}
	\begin{theorem}\label{theorem-Omega_42_2}
	Let $G=(I\,|\,\Omega_{2}^{42}(v))$ where $\Omega_{2}^{42}(v)$ is as defined in Definition \ref{definition-Omega_42_2}. Then $G$ is a generator matrix of a self-dual $[84,42]$ code over $R$ if and only if 
		\begin{empheq}[left=\empheqlbrace]{align*}
			A_{1}A_{1}^T+A_{2}A_{2}^T+A_{3}A_{3}^T+B_{1}B_{1}^T+B_{2}B_{2}^T+B_{3}B_{3}^T&=I_7,\\
			C_{1}C_{1}^T+C_{2}C_{2}^T+C_{3}C_{3}^T+D_{1}D_{1}^T+D_{2}D_{2}^T+D_{3}D_{3}^T&=I_7,\\
			A_{2}A_{1}^T+A_{3}A_{2}^T+A_{1}A_{3}^{\star T}+B_{2}B_{1}^T+B_{3}B_{2}^T+B_{1}B_{3}^{\star T}&=\vct{0},\\
			A_{1}C_{1}^T+A_{2}C_{2}^T+A_{3}C_{3}^T+B_{1}D_{1}^T+B_{2}D_{2}^T+B_{3}D_{3}^T&=\vct{0},\\
			A_{2}C_{1}^T+A_{3}C_{2}^T+A_{1}C_{3}^{\star T}+B_{2}D_{1}^T+B_{3}D_{2}^T+B_{1}D_{3}^{\star T}&=\vct{0},\\
			A_{3}C_{1}^T+A_{1}C_{2}^{\star T}+A_{2}C_{3}^{\star T}+B_{3}D_{1}^T+B_{1}D_{2}^{\star T}+B_{2}D_{3}^{\star T}&=\vct{0},\\
			C_{2}C_{1}^T+C_{3}C_{2}^T+C_{1}C_{3}^{\star T}+D_{2}D_{1}^T+D_{3}D_{2}^T+D_{1}D_{3}^{\star T}&=\vct{0}.
		\end{empheq}
	\end{theorem}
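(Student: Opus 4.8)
The plan is to reduce self-duality to the single matrix identity $\Omega_{2}^{42}(v)\,\Omega_{2}^{42}(v)^{T}=I_{42}$, exactly as in the proofs of Theorems \ref{theorem-Omega_20_1}--\ref{theorem-Omega_42_1}: since $R$ has characteristic $2$, the code generated by $(I\,|\,M)$ is self-dual if and only if $MM^{T}=-I_{n}=I_{n}$. Everything then comes down to computing $\Omega_{2}^{42}(v)\,\Omega_{2}^{42}(v)^{T}$ and extracting the conditions forcing it to equal $I_{42}$. To keep the bookkeeping manageable I would regard $\Omega_{2}^{42}(v)$ as a $2\times2$ array of ``super-blocks'', where the top-left super-block $\mathcal{A}$ is the twisted block-circulant $3\times3$ array built from $A_{1},A_{2},A_{3}$ in Definition \ref{definition-Omega_42_2} (with $\star$ applied to the wrapped sub-diagonal entries), and likewise $\mathcal{B},\mathcal{C},\mathcal{D}$ are built from the $B_{i},C_{i},D_{i}$. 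Then $\Omega_{2}^{42}(v)\,\Omega_{2}^{42}(v)^{T}$ has super-block entries $\mathcal{A}\mathcal{A}^{T}+\mathcal{B}\mathcal{B}^{T}$, $\mathcal{A}\mathcal{C}^{T}+\mathcal{B}\mathcal{D}^{T}$, its transpose, and $\mathcal{C}\mathcal{C}^{T}+\mathcal{D}\mathcal{D}^{T}$, each a $3\times3$ array of $7\times7$ blocks.

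For the two diagonal super-blocks I would show that each is itself twisted block-circulant. Taking $\mathcal{A}\mathcal{A}^{T}+\mathcal{B}\mathcal{B}^{T}$, the three diagonal $7\times7$ blocks all collapse to $X_{1}=A_{1}A_{1}^{T}+A_{2}A_{2}^{T}+A_{3}A_{3}^{T}+B_{1}B_{1}^{T}+B_{2}B_{2}^{T}+B_{3}B_{3}^{T}$, using Lemma \ref{lemma-star}(ii) in the form $A_{i}^{\star}A_{i}^{\star T}=A_{i}A_{i}^{T}$ to absorb the starred rows; the two blocks immediately above the diagonal both equal $Y_{1}$, the left-hand side of the third stated equation; and the top-right block equals an expression $Y_{1}'$ which one checks satisfies $Y_{1}'^{\star}=Y_{1}^{T}$ via Lemma \ref{lemma-starcirc}(ii),(iii). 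Since this super-block is symmetric (being of the form $MM^{T}$), its sub-diagonal blocks are handled automatically, and it equals $I_{21}$ if and only if $X_{1}=I_{7}$ and $Y_{1}=\vct{0}$, i.e. the first and third stated equations. The identical argument applied to $\mathcal{C}\mathcal{C}^{T}+\mathcal{D}\mathcal{D}^{T}$ yields the second and seventh equations, with diagonal $X_{2}$ and off-diagonal $Y_{5}$.

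For the off-diagonal super-block $\mathcal{A}\mathcal{C}^{T}+\mathcal{B}\mathcal{D}^{T}$ there is no symmetry to exploit, so I would compute all nine $7\times7$ blocks directly and show they assemble into the twisted block-circulant pattern with repeated diagonal $Y_{2}$ and off-diagonals cycling through $Y_{3},Y_{4}$ together with their $\star$-images, where $Y_{2},Y_{3},Y_{4}$ are the left-hand sides of the fourth, fifth and sixth stated equations. Concretely, the three identities $A^{\star}B^{\star T}=AB^{T}$, $(AB^{T})^{\star}=A^{\star}B^{T}$ and $(AB^{\star T})^{\star}=AB^{T}$ (Lemmas \ref{lemma-star} and \ref{lemma-starcirc}) show, for instance, that the $(2,1)$ block equals $Y_{4}^{\star}$ and the $(3,1)$ block equals $Y_{3}^{\star}$, while the remaining blocks equal $Y_{2},Y_{3}$ or $Y_{4}$. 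Because $\star$ is right-multiplication by the invertible matrix $P$, it is a bijection on $R^{7\times7}$, so a block of the form $Y_{i}^{\star}$ vanishes if and only if $Y_{i}$ does; hence this super-block is $\vct{0}$ if and only if $Y_{2}=Y_{3}=Y_{4}=\vct{0}$, and its transpose vanishes under the same conditions. Collecting the diagonal and off-diagonal conclusions yields precisely the seven stated equations.

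The main obstacle is purely the bookkeeping in the previous two paragraphs: verifying that every $7\times7$ block not appearing verbatim in the theorem is the $\star$-image (or the transpose) of one that does, so that the apparent proliferation of scalar block-conditions collapses to exactly seven representatives. All of these reductions rest on each $A_{i},B_{i},C_{i},D_{i}$ being circulant, which is what licenses Lemma \ref{lemma-starcirc}; I would therefore invoke circularity explicitly at each simplification rather than treating the $\star$-identities as valid for arbitrary matrices.
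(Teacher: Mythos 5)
Your proposal is correct and follows essentially the same route as the paper: reduce self-duality to $\Omega_{2}^{42}(v)\Omega_{2}^{42}(v)^{T}=I_{42}$, compute the $6\times6$ block product using Lemmas \ref{lemma-star} and \ref{lemma-starcirc}, and observe that every block is one of $X_{1},X_{2},Y_{1},\dots,Y_{5}$ up to transpose and $\star$, which are bijections, so the product equals $I_{42}$ exactly when the seven stated conditions hold. The paper simply writes out the full $6\times6$ block matrix at once rather than organising it into $2\times2$ super-blocks, but the content is identical.
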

	\begin{proof}
		We know that $G$ is a generator matrix of a self-dual $[84,42]$ code over $R$ if and only if $\Omega_{2}^{42}(v)\Omega_{2}^{42}(v)^T=I_{42}$. Using Lemmas \ref{lemma-star} and \ref{lemma-starcirc}, we find that
			\begin{align*}
				\Omega_{2}^{42}(v)\Omega_{2}^{42}(v)^T=\begin{pmatrix}
					X_1 & Y_1 & Y_1^{\star T} & Y_2 & Y_3 & Y_4\\
					Y_1^T & X_1 & Y_1 & Y_4^{\star} & Y_2 & Y_3\\
					Y_1^{\star} & Y_1^T & X_1 & Y_3^{\star} & Y_4^{\star} & Y_2\\
					Y_2^T & Y_4^{\star T} & Y_3^{\star T} & X_2 & Y_5 & Y_5^{\star T}\\
					Y_3^T & Y_2^T & Y_4^{\star T} & Y_5^T & X_2 & Y_5\\
					Y_4^T & Y_3^T & Y_2^T & Y_5^{\star} & Y_5^T & X_2
				\end{pmatrix},
			\end{align*}
		where
			\begin{align*}
				X_1&=A_{1}A_{1}^T+A_{2}A_{2}^T+A_{3}A_{3}^T+B_{1}B_{1}^T+B_{2}B_{2}^T+B_{3}B_{3}^T,\\
				X_2&=C_{1}C_{1}^T+C_{2}C_{2}^T+C_{3}C_{3}^T+D_{1}D_{1}^T+D_{2}D_{2}^T+D_{3}D_{3}^T\\
			\end{align*}	
		and
			\begin{align*}
				Y_1&=A_{2}A_{1}^T+A_{3}A_{2}^T+A_{1}A_{3}^{\star T}+B_{2}B_{1}^T+B_{3}B_{2}^T+B_{1}B_{3}^{\star T},\\
				Y_2&=A_{1}C_{1}^T+A_{2}C_{2}^T+A_{3}C_{3}^T+B_{1}D_{1}^T+B_{2}D_{2}^T+B_{3}D_{3}^T,\\
				Y_3&=A_{2}C_{1}^T+A_{3}C_{2}^T+A_{1}C_{3}^{\star T}+B_{2}D_{1}^T+B_{3}D_{2}^T+B_{1}D_{3}^{\star T},\\
				Y_4&=A_{3}C_{1}^T+A_{1}C_{2}^{\star T}+A_{2}C_{3}^{\star T}+B_{3}D_{1}^T+B_{1}D_{2}^{\star T}+B_{2}D_{3}^{\star T},\\
				Y_5&=C_{2}C_{1}^T+C_{3}C_{2}^T+C_{1}C_{3}^{\star T}+D_{2}D_{1}^T+D_{3}D_{2}^T+D_{1}D_{3}^{\star T}.
			\end{align*}
			
		Clearly, $Y_i=\vct{0}$ if and only if $Y_i^T=\vct{0}$, $Y_i^{\star}=\vct{0}$ and $Y_i^{\star T}=\vct{0}$ for $i\in[1\zint 5]$. Thus, $\Omega_{2}^{42}(v)\Omega_{2}^{42}(v)^T=I_{42}$ if and only if
			\begin{empheq}[left=\empheqlbrace]{align*}
				X_1=X_2&=I_5,\\
				Y_1=Y_2=Y_3=Y_4=Y_5&=\vct{0}.
			\end{empheq}
	\end{proof}

	\begin{definition}\label{definition-Omega_24_1}
	Let $G\cong C_{12}\times C_2\cong\langle a,b\mid a^{12}=b^2=1,ab=ba\rangle$ with the fixed listing $G=(g_{12j+i+1})=a^ib^j$ for $i\in[0\zint 11]$ and $j\in[0\zint 1]$. Let $H\cong D_3\cong\langle c,d\mid c^3=d^2=1,dcd=c^{-1}\rangle$ with the fixed listing $H=(h_{3j+i+1})=c^id^j$ for $i\in[0\zint 2]$ and $j\in[0\zint 1]$. Let $H'=\vct{1}$ and $P'=\vct{1}$. Let $v=\sum_{i=1}^{24}{\alpha_{g_i}}g_i\in RG$. If $\Omega_{1}^{24}(v)$ is the composite $(G,H)$-matrix of $v\in RG$ with respect to $H'$ and $P'$, then
		\begin{align*}
			\Omega_{1}^{24}(v)=I_2\otimes\CIR{\tilde{A},\tilde{B}}+J_2\otimes\CIR{\tilde{C},\tilde{D}},
		\end{align*}
	where $\otimes$, $I_2$ and $J_2$ denote the Kronecker product, $2\times 2$ identity matrix and $2\times 2$ exchange matrix, respectively and
		\begin{align*}
			&\tilde{A}=\begin{pmatrix}
			A_1 & A_2\\
			A_2^T & A_1^T
			\end{pmatrix},\quad
			\tilde{B}=\begin{pmatrix}
			B_1 & B_2\\
			B_2^T & B_1^T
			\end{pmatrix},\\
			&\tilde{C}=\begin{pmatrix}
			C_1 & C_2\\
			C_2^T & C_1^T
			\end{pmatrix},\quad
			\tilde{D}=\begin{pmatrix}
			D_1 & D_2\\
			D_2^T & D_1^T
			\end{pmatrix},
		\end{align*}
	where $A_1=\cir{\vct{v}_{1:3}}$, $A_2=\cir{\vct{v}_{4:6}}$, $B_1=\cir{\vct{v}_{7:9}}$, $B_2=\cir{\vct{v}_{10:12}}$, $C_1=\cir{\vct{v}_{13:15}}$, $C_2=\cir{\vct{v}_{16:18}}$, $D_1=\cir{\vct{v}_{19:21}}$ and $D_2=\cir{\vct{v}_{22:24}}$.
	\end{definition}
	\begin{theorem}\label{theorem-Omega_24_1}
	Let $G=(I\,|\,\Omega_{1}^{24}(v))$ where $\Omega_{1}^{24}(v)$ is as defined in Definition \ref{definition-Omega_24_1}. Then $G$ is a generator matrix of a self-dual $[48,24]$ code over $R$ if and only if 
		\begin{empheq}[left=\empheqlbrace]{align*}
			A_1A_1^T+A_2A_2^T+B_1B_1^T+B_2B_2^T+C_1C_1^T+C_2C_2^T+D_1D_1^T+D_2D_2^T&=I_3,\\
		    A_1B_1^T+A_2B_2^T+B_1A_1^T+B_2A_2^T+C_1D_1^T+C_2D_2^T+D_1C_1^T+D_2C_2^T&=\vct{0},\\
			A_1C_1^T+A_2C_2^T+B_1D_1^T+B_2D_2^T+C_1A_1^T+C_2A_2^T+D_1B_1^T+D_2B_2^T&=\vct{0},\\
			A_1D_1^T+A_2D_2^T+B_1C_1^T+B_2C_2^T+C_1B_1^T+C_2B_2^T+D_1A_1^T+D_2A_2^T&=\vct{0}.
		\end{empheq}
	\end{theorem}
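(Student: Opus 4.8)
The plan is to peel off the two Kronecker factors of $\Omega_{1}^{24}(v)$ one at a time, reducing the single matrix identity $\Omega_{1}^{24}(v)\Omega_{1}^{24}(v)^T=I_{24}$ (which, exactly as in the preceding proofs, is equivalent to $G$ generating a self-dual $[48,24]$ code over $R$) to the four stated $3\times 3$ equations. Throughout I would use the standard Kronecker identities $(X\otimes Y)^T=X^T\otimes Y^T$ and $(X\otimes Y)(X'\otimes Y')=XX'\otimes YY'$, the symmetry of $I_2$ and $J_2$, and the relations $I_2J_2=J_2I_2=J_2$ and $J_2^2=I_2$.

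For the outer reduction I would set $M=\CIR{\tilde A,\tilde B}$ and $N=\CIR{\tilde C,\tilde D}$, so that $\Omega_{1}^{24}(v)=I_2\otimes M+J_2\otimes N$. The identities above then give
	\[
	\Omega_{1}^{24}(v)\Omega_{1}^{24}(v)^T=I_2\otimes(MM^T+NN^T)+J_2\otimes(MN^T+NM^T).
	\]
Since a matrix of the form $I_2\otimes P+J_2\otimes Q$ is the block matrix carrying $P$ on its diagonal blocks and $Q$ on its off-diagonal blocks, it equals $I_{24}=I_2\otimes I_{12}$ if and only if $MM^T+NN^T=I_{12}$ and $MN^T+NM^T=\vct{0}$. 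Writing $M=I_2\otimes\tilde A+J_2\otimes\tilde B$ and $N=I_2\otimes\tilde C+J_2\otimes\tilde D$ and repeating the identical computation one level down converts these two $12\times 12$ conditions into four $6\times 6$ conditions: the diagonal sum $\tilde A\tilde A^T+\tilde B\tilde B^T+\tilde C\tilde C^T+\tilde D\tilde D^T=I_6$, the symmetrised sum $\tilde A\tilde B^T+\tilde B\tilde A^T+\tilde C\tilde D^T+\tilde D\tilde C^T=\vct{0}$, and the two cross conditions obtained by regrouping $MN^T+NM^T$, namely $\tilde A\tilde C^T+\tilde C\tilde A^T+\tilde B\tilde D^T+\tilde D\tilde B^T=\vct{0}$ and $\tilde A\tilde D^T+\tilde D\tilde A^T+\tilde B\tilde C^T+\tilde C\tilde B^T=\vct{0}$.

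The final step collapses the $\tilde{\cdot}$-blocks. Direct multiplication gives
	\[
	\tilde X\tilde Y^T=\begin{pmatrix} X_1Y_1^T+X_2Y_2^T & X_1Y_2+X_2Y_1\\ X_2^TY_1^T+X_1^TY_2^T & X_1^TY_1+X_2^TY_2\end{pmatrix}.
	\]
Two observations then close the argument over characteristic $2$. First, because the $X_i$ are circulant and hence commute, the single products $\tilde X\tilde X^T$ have vanishing off-diagonal blocks, since $X_1X_2+X_2X_1=2X_1X_2=\vct{0}$, and equal diagonal blocks, so $\tilde X\tilde X^T=I_2\otimes(X_1X_1^T+X_2X_2^T)$. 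Second, in each symmetrised pair the off-diagonal blocks cancel and the diagonal blocks coincide, yielding $\tilde X\tilde Y^T+\tilde Y\tilde X^T=I_2\otimes(X_1Y_1^T+X_2Y_2^T+Y_1X_1^T+Y_2X_2^T)$. Substituting these into the four $6\times 6$ conditions, factoring out the injective operation $I_2\otimes(\cdot)$ and using $I_6=I_2\otimes I_3$, each condition becomes precisely one of the four displayed $3\times 3$ equations, with the eight summands of each equation arising from the four terms $\{A,B,C,D\}$ paired with their transposes.

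The only genuine ingredient is the characteristic-$2$ cancellation that annihilates the off-diagonal blocks of the $\tilde{\cdot}$-products; this is exactly what allows the two-level reduction to close up to clean $3\times 3$ identities rather than leaving residual off-diagonal constraints. Beyond that, the anticipated difficulty is purely organisational: carrying the two successive Kronecker expansions without mislabelling terms, verifying that every one of the eight summands in each final equation is produced exactly once, and recording that $I_2\otimes(\cdot)$ is injective so that the entire chain of equivalences may be read in both directions.
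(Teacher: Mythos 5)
Your proposal is correct and follows essentially the same route as the paper: both reduce $G$ generating a self-dual code to $\Omega_{1}^{24}(v)\Omega_{1}^{24}(v)^T=I_{24}$ and then identify the block structure of that product, with the characteristic-$2$ cancellation of the commuting circulant cross-terms doing the real work. The only difference is presentational — the paper simply asserts the resulting form $I_2\otimes\CIR{\tilde{X},\tilde{Y}_1}+J_2\otimes\CIR{\tilde{Y}_2,\tilde{Y}_3}$, whereas you make the two-level Kronecker expansion explicit.
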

	\begin{proof}
	    We know that $G$ is a generator matrix of a self-dual $[48,24]$ code over $R$ if and only if $\Omega_{1}^{24}(v)\Omega_{1}^{24}(v)^T=I_{24}$. We find that
			\begin{align*}
				\Omega_{1}^{24}(v)\Omega_{1}^{24}(v)^T=I_2\otimes\CIR{\tilde{X},\tilde{Y}_1}+J_2\otimes\CIR{\tilde{Y}_2,\tilde{Y}_3},
			\end{align*}
		where
			\begin{alignat*}{2}
				&\tilde{X}=\begin{pmatrix}
				X & \vct{0}\\
				\vct{0} & X
				\end{pmatrix},\quad
				&&\tilde{Y}_1=\begin{pmatrix}
				Y_1 & \vct{0}\\
				\vct{0} & Y_1
				\end{pmatrix},\\
				&\tilde{Y}_2=\begin{pmatrix}
				Y_2 & \vct{0}\\
				\vct{0} & Y_2
				\end{pmatrix},\quad
				&&\tilde{Y}_3=\begin{pmatrix}
				Y_3 & \vct{0}\\
				\vct{0} & Y_3
				\end{pmatrix}
			\end{alignat*}
		with
			\begin{align*}
				X&=A_1A_1^T+A_2A_2^T+B_1B_1^T+B_2B_2^T+C_1C_1^T+C_2C_2^T+D_1D_1^T+D_2D_2^T
			\end{align*}	
		and
			\begin{align*}
				Y_1&=A_1B_1^T+A_2B_2^T+B_1A_1^T+B_2A_2^T+C_1D_1^T+C_2D_2^T+D_1C_1^T+D_2C_2^T,\\
				Y_2&=A_1C_1^T+A_2C_2^T+B_1D_1^T+B_2D_2^T+C_1A_1^T+C_2A_2^T+D_1B_1^T+D_2B_2^T,\\
				Y_3&=A_1D_1^T+A_2D_2^T+B_1C_1^T+B_2C_2^T+C_1B_1^T+C_2B_2^T+D_1A_1^T+D_2A_2^T.
			\end{align*}
			
        Thus, $\Omega_{1}^{24}(v)\Omega_{1}^{24}(v)^T=I_{24}$ if and only if
			\begin{empheq}[left=\empheqlbrace]{align*}
				X_1=X_2&=I_3,\\
				Y_1=Y_2=Y_3&=\vct{0}.
			\end{empheq}
	\end{proof}

	\begin{definition}\label{definition-Omega_24_2}
	Let $G\cong D_{12}\cong\langle a,b\mid a^{12}=b^2=1,bab=a^{-1}\rangle$ with the fixed listing $G=(g_{12j+i+1})=a^ib^j$ for $i\in[0\zint 11]$ and $j\in[0\zint 1]$. Let $H\cong C_{2\cdot 6}\cong\langle c\mid c^{2\cdot 6}=1\rangle$ with the fixed listing $H=(h_{6j+i+1})=c^{2i+j}$ for $i\in[0\zint 5]$ and $j\in[0\zint 1]$. Let $H'=\vct{1}$ and $P'=\vct{1}$. Let $v=\sum_{i=1}^{24}{\alpha_{g_i}}g_i\in RG$. If $\Omega_{2}^{24}(v)$ is the composite $(G,H)$-matrix of $v\in RG$ with respect to $H'$ and $P'$, then
		\begin{align*}
			\Omega_{2}^{24}(v)=\begin{pmatrix}
				A_1 & A_2 & B_1 & B_2\\
				A_2^{\star} & A_1 & B_2^{\star} & B_1\\
				C_1 & C_2 & D_1 & D_2\\
				C_2^{\star} & C_1 & D_2^{\star} & D_1	
			\end{pmatrix},
		\end{align*}
	where $A_1=\cir{\vct{v}_{1:6}}$, $A_2=\cir{\vct{v}_{7:12}}$, $B_1=\cir{\vct{v}_{13:18}}$, $B_2=\cir{\vct{v}_{19:24}}$, $C_1=\cir{\vct{v}_{13},\vct{v}_{24:20}}$, $C_2=\cir{\vct{v}_{19:14}}$, $D_1=\cir{\vct{v}_{1},\vct{v}_{12:8}}$, $D_2=\cir{\vct{v}_{7:2}}$ and $\star$ is the transformation defined in Proposition \ref{proposition-star}.
	\end{definition}
	\begin{theorem}\label{theorem-Omega_24_2}
	Let $G=(I\,|\,\Omega_{2}^{24}(v))$ where $\Omega_{2}^{24}(v)$ is as defined in Definition \ref{definition-Omega_24_2}. Then $G$ is a generator matrix of a self-dual $[48,24]$ code over $R$ if and only if 
		\begin{empheq}[left=\empheqlbrace]{align*}
			A_1A_1^T+A_2A_2^T+B_1B_1^T+B_2B_2^T&=I_6,\\
			C_1C_1^T+C_2C_2^T+D_1D_1^T+D_2D_2^T&=I_6,\\
			A_1A_2^{\star T}+A_2A_1^T+B_1B_2^{\star T}+B_2B_1^T&=\vct{0},\\
			A_1C_1^T+A_2C_2^T+B_1D_1^T+B_2D_2^T&=\vct{0},\\
			A_1C_2^{\star T}+A_2C_1^T+B_1D_2^{\star T}+B_2D_1^T&=\vct{0},\\
			C_1C_2^{\star T}+C_2C_1^T+D_1D_2^{\star T}+D_2D_1^T&=\vct{0}.
		\end{empheq}
	\end{theorem}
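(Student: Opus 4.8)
The plan is to follow the template of the preceding theorems. Since $R$ has characteristic $2$, the matrix $G=(I\,|\,\Omega_2^{24}(v))$ generates a self-dual $[48,24]$ code if and only if $\Omega_2^{24}(v)\Omega_2^{24}(v)^T=I_{24}$, so the whole argument amounts to computing this product and determining when it equals the identity. I would regard $\Omega_2^{24}(v)$ as a $2\times 2$ array of $12\times 12$ blocks, writing $M$, $N$, $P$, $Q$ for the blocks formed by the $A$-, $B$-, $C$- and $D$-circulants respectively, so that each has the internal shape $\left(\begin{smallmatrix}E_1 & E_2\\ E_2^{\star} & E_1\end{smallmatrix}\right)$ for suitable $6\times 6$ circulants $E_1,E_2$. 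Block multiplication then presents $\Omega_2^{24}(v)\Omega_2^{24}(v)^T$ as the $2\times 2$ array with diagonal entries $MM^T+NN^T$ and $PP^T+QQ^T$ and off-diagonal entry $MP^T+NQ^T$ together with its transpose.

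First I would expand each of these $12\times 12$ products into its four $6\times 6$ sub-blocks. The diagonal sub-blocks simplify at once via Lemma \ref{lemma-star}\ref{lemma-star-2}, since terms such as $A_2^{\star}A_2^{\star T}$ collapse to $A_2A_2^T$; hence the two diagonal sub-blocks of $MM^T+NN^T$ coincide and equal the sum $X_1=A_1A_1^T+A_2A_2^T+B_1B_1^T+B_2B_2^T$ of the first stated condition, while those of $PP^T+QQ^T$ give $X_2$.

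The key step is to match the off-diagonal sub-blocks with the remaining four stated sums and to show that their partner sub-blocks are the corresponding $\star$-images, so that it suffices to set each sum to $\vct{0}$. I would compute that one off-diagonal sub-block of $MP^T+NQ^T$ is $Z_2=A_1C_2^{\star T}+A_2C_1^T+B_1D_2^{\star T}+B_2D_1^T$, and then apply Lemma \ref{lemma-star}\ref{lemma-star-1} together with parts (ii) and (iii) of Lemma \ref{lemma-starcirc}: starring $Z_2$ sends $A_1C_2^{\star T}\mapsto A_1C_2^T$ (part (iii), as $C_2$ is circulant) and $A_2C_1^T\mapsto A_2^{\star}C_1^T$ (part (ii), as $C_1$ is circulant), so the companion sub-block is exactly $Z_2^{\star}$. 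The same manipulation identifies the remaining sub-block of $MP^T+NQ^T$ with $Z_1=A_1C_1^T+A_2C_2^T+B_1D_1^T+B_2D_2^T$, the off-diagonal sub-block of $MM^T+NN^T$ with $W_1=A_1A_2^{\star T}+A_2A_1^T+B_1B_2^{\star T}+B_2B_1^T$ (partnered by $W_1^{\star}$), and likewise produces $W_2$.

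Finally, because the $\star$ transformation is right-multiplication by the invertible permutation matrix $P$, and transposition is invertible, a sub-block vanishes if and only if its $\star$-image or transpose vanishes. Therefore $\Omega_2^{24}(v)\Omega_2^{24}(v)^T=I_{24}$ holds precisely when $X_1=X_2=I_6$ and $W_1=W_2=Z_1=Z_2=\vct{0}$, which are exactly the six displayed equations. I expect the only real difficulty to be bookkeeping: tracking which factors carry a $\star$ and invoking the correct one of parts (ii), (iii) of Lemma \ref{lemma-starcirc} on the circulant factor, rather than any conceptual obstacle.
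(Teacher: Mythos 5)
Your proposal is correct and follows essentially the same route as the paper's proof: reduce self-duality to $\Omega_{2}^{24}(v)\Omega_{2}^{24}(v)^T=I_{24}$, expand the product block-wise, use Lemma \ref{lemma-star} and parts (ii), (iii) of Lemma \ref{lemma-starcirc} to collapse starred terms and identify companion sub-blocks as $\star$-images or transposes, and conclude since $\star$ and transposition are invertible. The only difference is cosmetic (you first group into $12\times12$ blocks before sub-dividing, and you label the paper's $Y_1^T$ partner as $W_1^{\star}$, which coincides with it here), so there is nothing to flag.
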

	\begin{proof}
		We know that $G$ is a generator matrix of a self-dual $[48,24]$ code over $R$ if and only if $\Omega_{2}^{24}(v)\Omega_{2}^{24}(v)^T=I_{24}$. Using Lemmas \ref{lemma-star} and \ref{lemma-starcirc}, we find that
			\begin{align*}
				\Omega_{2}^{24}(v)\Omega_{2}^{24}(v)^T=\begin{pmatrix}
					X_1 & Y_1 & Y_2 & Y_3\\
					Y_1^T & X_1 & Y_3^{\star} & Y_2\\
					Y_2^T & Y_3^{\star T} & X_2 &  Y_4\\
					Y_3^T & Y_2^T & Y_4^T & X_2
				\end{pmatrix},
			\end{align*}
		where
			\begin{align*}
				X_1&=A_1A_1^T+A_2A_2^T+B_1B_1^T+B_2B_2^T,\\
				X_2&=C_1C_1^T+C_2C_2^T+D_1D_1^T+D_2D_2^T
			\end{align*}	
		and
			\begin{align*}
				Y_1&=A_1A_2^{\star T}+A_2A_1^T+B_1B_2^{\star T}+B_2B_1^T,\\
				Y_2&=A_1C_1^T+A_2C_2^T+B_1D_1^T+B_2D_2^T,\\
				Y_3&=A_1C_2^{\star T}+A_2C_1^T+B_1D_2^{\star T}+B_2D_1^T,\\
				Y_4&=C_1C_2^{\star T}+C_2C_1^T+D_1D_2^{\star T}+D_2D_1^T.
			\end{align*}
			
		Clearly, $Y_i=\vct{0}$ if and only if $Y_i^T=\vct{0}$, $Y_i^{\star}=\vct{0}$ and $Y_i^{\star T}=\vct{0}$ for $i\in[1\zint 4]$. Thus, $\Omega_{2}^{24}(v)\Omega_{2}^{24}(v)^T=I_{24}$ if and only if
			\begin{empheq}[left=\empheqlbrace]{align*}
				X_1=X_2&=I_6,\\
				Y_1=Y_2=Y_3=Y_4&=\vct{0}.
			\end{empheq}
	\end{proof}

	\begin{definition}\label{definition-Omega_24_3}
	Let $G\cong D_{12}\cong\langle a,b\mid a^{12}=b^2=1,bab=a^{-1}\rangle$ with the fixed listing $G=(g_{12j+i+1})=a^ib^j$ for $i\in[0\zint 11]$ and $j\in[0\zint 1]$. Let $H\cong D_6\cong\langle c,d\mid c^6=d^2=1,dcd=c^{-1}\rangle$ with the fixed listing $H=(h_{6j+i+1})=c^id^j$ for $i\in[0\zint 5]$ and $j\in[0\zint 1]$. Let $H'=\vct{1}$ and $P'=\vct{1}$. Let $v=\sum_{i=1}^{24}{\alpha_{g_i}}g_i\in RG$. If $\Omega_{3}^{24}(v)$ is the composite $(G,H)$-matrix of $v\in RG$ with respect to $H'$ and $P'$, then
		\begin{align*}
			\Omega_{3}^{24}(v)=\begin{pmatrix}
				A_1 & A_2 & B_1 & B_2\\
				A_2^T & A_1^T & B_2^T & B_1^T\\
				C_1 & C_2 & D_1 & D_2\\
				C_2^T & C_1^T & D_2^T & D_1^T
			\end{pmatrix},
		\end{align*}
	where $A_1=\cir{\vct{v}_{1:6}}$, $A_2=\cir{\vct{v}_{7:12}}$, $B_1=\cir{\vct{v}_{13:18}}$, $B_2=\cir{\vct{v}_{19:24}}$, $C_1=\cir{\vct{v}_{13},\vct{v}_{24:20}}$, $C_2=\cir{\vct{v}_{19:14}}$, $D_1=\cir{\vct{v}_{1},\vct{v}_{12:8}}$ and $D_2=\cir{\vct{v}_{7:2}}$.
	\end{definition}
	\begin{theorem}\label{theorem-Omega_24_3}
	Let $G=(I\,|\,\Omega_{3}^{24}(v))$ where $\Omega_{3}^{24}(v)$ is as defined in Definition \ref{definition-Omega_24_3}. Then $G$ is a generator matrix of a self-dual $[48,24]$ code over $R$ if and only if 
		\begin{empheq}[left=\empheqlbrace]{align*}
			A_1A_1^T+A_2A_2^T+B_1B_1^T+B_2B_2^T&=I_6,\\
			C_1C_1^T+C_2C_2^T+D_1D_1^T+D_2D_2^T&=I_6,\\
			A_1C_1^T+A_2C_2^T+B_1D_1^T+B_2D_2^T&=\vct{0},\\
			A_1C_2+A_2C_1+B_1D_2+B_2D_1&=\vct{0}.
		\end{empheq}
	\end{theorem}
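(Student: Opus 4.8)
The plan is to follow the template established for the preceding theorems. As in every case so far, $G=(I\,|\,\Omega_{3}^{24}(v))$ generates a self-dual $[48,24]$ code over $R$ if and only if $\Omega_{3}^{24}(v)\Omega_{3}^{24}(v)^T=I_{24}$; this is immediate because $R$ has characteristic $2$, so $-I_{24}=I_{24}$ and the usual criterion for a generator matrix of the shape $(I\,|\,M)$ applies. The whole argument therefore reduces to computing the product $\Omega_{3}^{24}(v)\Omega_{3}^{24}(v)^T$ explicitly as a $4\times 4$ array of $6\times 6$ blocks and recording exactly when it equals $I_{24}$.

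First I would write down $\Omega_{3}^{24}(v)^T$ by transposing the $4\times 4$ block array (the $(i,j)$ block of the transpose being the transpose of the $(j,i)$ block of $\Omega_{3}^{24}(v)$), and then carry out the block multiplication entry by entry. Three facts do all the work. Every block $A_i,B_i,C_i,D_i$ is circulant, so by the results established earlier any two of them commute multiplicatively and each transpose $A_i^T$, etc., is again circulant; transposition reverses products, so $(MN)^T=N^TM^T=M^TN^T$ for circulant $M,N$; and $R$ has characteristic $2$. The last point forces the ``adjacent'' off-diagonal blocks to vanish: for example the $(1,2)$ block evaluates to $A_1A_2+A_2A_1+B_1B_2+B_2B_1=2(A_1A_2+B_1B_2)=\vct{0}$, and the $(2,1),(3,4),(4,3)$ blocks behave likewise. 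Reordering factors by commutativity and folding products into standard form by transposition, I expect the four diagonal blocks to collapse to $X_1,X_1,X_2,X_2$ and the mixed off-diagonal blocks to collapse to $Y_1,Y_2$ and their transposes, where
	\begin{align*}
		X_1&=A_1A_1^T+A_2A_2^T+B_1B_1^T+B_2B_2^T,\\
		X_2&=C_1C_1^T+C_2C_2^T+D_1D_1^T+D_2D_2^T,\\
		Y_1&=A_1C_1^T+A_2C_2^T+B_1D_1^T+B_2D_2^T,\\
		Y_2&=A_1C_2+A_2C_1+B_1D_2+B_2D_1.
	\end{align*}
Concretely I anticipate obtaining
	\begin{align*}
		\Omega_{3}^{24}(v)\Omega_{3}^{24}(v)^T=\begin{pmatrix}
			X_1 & \vct{0} & Y_1 & Y_2\\
			\vct{0} & X_1 & Y_2^T & Y_1^T\\
			Y_1^T & Y_2 & X_2 & \vct{0}\\
			Y_2^T & Y_1 & \vct{0} & X_2
		\end{pmatrix},
	\end{align*}
which is precisely the block shape already produced in Theorem \ref{theorem-Omega_20_1}.

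To finish, I would observe that $Y_i=\vct{0}$ if and only if $Y_i^T=\vct{0}$, so the single matrix equation $\Omega_{3}^{24}(v)\Omega_{3}^{24}(v)^T=I_{24}$ holds exactly when $X_1=X_2=I_6$ and $Y_1=Y_2=\vct{0}$, which are the four displayed conditions. The only genuine obstacle is bookkeeping accuracy in the multiplication step: the four ``mixed'' blocks in positions $(1,3),(1,4),(2,3),(2,4)$ and their symmetric partners each require repeated use of circulant commutativity together with $(MN)^T=M^TN^T$ to be matched against $Y_1$, $Y_2$, $Y_1^T$ or $Y_2^T$, and it is easy to transpose the wrong factor. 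Some care is also needed to confirm that the diagonal blocks sitting in the transposed rows (rows $2$ and $4$) really reduce to $X_1$ and $X_2$ rather than a twisted variant; this holds only because $M^TM=MM^T$ for circulant $M$.
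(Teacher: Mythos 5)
Your proposal is correct and follows essentially the same route as the paper's own proof: reduce to $\Omega_{3}^{24}(v)\Omega_{3}^{24}(v)^T=I_{24}$, compute the product as the block matrix $\left(\begin{smallmatrix} X_1 & \vct{0} & Y_1 & Y_2\\ \vct{0} & X_1 & Y_2^T & Y_1^T\\ Y_1^T & Y_2 & X_2 & \vct{0}\\ Y_2^T & Y_1 & \vct{0} & X_2 \end{smallmatrix}\right)$ using circulant commutativity and characteristic $2$, and conclude via $Y_i=\vct{0}\iff Y_i^T=\vct{0}$. The block computation you anticipate (including the vanishing of the $(1,2)$-type blocks and the identification of the row-$2$ and row-$4$ diagonal blocks with $X_1$ and $X_2$) checks out exactly as in the paper.
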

	\begin{proof}
		We know that $G$ is a generator matrix of a self-dual $[48,24]$ code over $R$ if and only if $\Omega_{3}^{24}(v)\Omega_{3}^{24}(v)^T=I_{24}$. We find that
			\begin{align*}
				\Omega_{3}^{24}(v)\Omega_{3}^{24}(v)^T=\begin{pmatrix}
				    X_1 & \vct{0} & Y_1 & Y_2\\
				    \vct{0} & X_1 & Y_2^T & Y_1^T\\
				    Y_1^T & Y_2 & X_2 & \vct{0}\\
				    Y_2^T & Y_1 & \vct{0} & X_2
				\end{pmatrix},
			\end{align*}
		where
			\begin{align*}
				X_1&=A_1A_1^T+A_2A_2^T+B_1B_1^T+B_2B_2^T,\\
				X_2&=C_1C_1^T+C_2C_2^T+D_1D_1^T+D_2D_2^T
			\end{align*}	
		and
			\begin{align*}
				Y_1&=A_1C_1^T+A_2C_2^T+B_1D_1^T+B_2D_2^T,\\
				Y_2&=A_1C_2+A_2C_1+B_1D_2+B_2D_1.
			\end{align*}
			
		Clearly, $Y_i=\vct{0}$ if and only if $Y_i^T=\vct{0}$ for $i\in[1\zint 2]$. Thus, $\Omega_{3}^{24}(v)\Omega_{3}^{24}(v)^T=I_{24}$ if and only if
			\begin{empheq}[left=\empheqlbrace]{align*}
				X_1=X_2&=I_6,\\
				Y_1=Y_2&=\vct{0}.
			\end{empheq}
	\end{proof}

\section{Results}

In this section, we apply the theorems given in the previous section to obtain many new best known binary self-dual codes. In particular, we obtain 28 singly-even $[80,40,14]$ codes, 107 $[84,42,14]$ codes, 105 singly-even $[96,48,16]$ codes and 121 doubly-even $[96,48,16]$ codes. 
	 
We search for these codes using MATLAB and determine their properties using Q-extension \cite{Q-extension} and Magma \cite{magma}. In MATLAB, we employ an algorithm which randomly searches for the construction parameters that satisfy the necessary and sufficient conditions stated in the corresponding theorem. For such parameters, we then build the corresponding binary generator matrices and print them to text files. We then use Q-extension to read these text files and determine the minimum distance and partial weight enumerator of each corresponding code. Furthermore, we determine the automorphism group order of each code using Magma. A database of generator matrices of the new codes is given online at \cite{gmd}. The database is partitioned into text files (interpretable by Q-extension) corresponding to each code type. In these files, specific properties of the codes including the construction parameters, weight enumerator parameter values and automorphism group order are formatted as comments above the generator matrices. Partial weight enumerators of the codes are also formatted as comments below the generator matrices. Table \ref{table-notation} gives the quaternary notation system we use to represent elements of $\FF_2+u\FF_2$ and $\FF_4$.
	
	\begin{table}[h!]
	\caption{Quaternary notation system for elements of $\FF_2+u\FF_2$ and $\FF_4$.}\label{table-notation}
	\centering
	\begin{adjustbox}{max width=\textwidth}
	\footnotesize
	\begin{tabular}{ccc}\midrule
	$\FF_2+u\FF_2$ & $\FF_4$ & Symbol\\\midrule
	$0$ & $0$ & \texttt{0}\\
	$1$ & $1$ & \texttt{1}\\
	$u$ & $w$ & \texttt{2}\\
	$1+u$ & $1+w$ & \texttt{3}\\\midrule
	\end{tabular}
	\end{adjustbox}
	\end{table}

\subsection{New Self-Dual Codes of Length 80}
The weight enumerator of a singly-even binary self-dual $[80,40,14]$ code is given in \cite{R-085} as
	\begin{align*}
	W_{80}=1+(3200+4\alpha)x^{14}+(47645-8\alpha+256\beta)x^{16}+\cdots,
	\end{align*}
where $\alpha,\beta\in\ZZ$. Previously known $(\alpha,\beta)$ values for weight enumerator $W_{80}$ can be found online at \cite{wepd} (see \cite{R-058,R-085,R-080,R-067,R-096,mres,amr1}).

We obtain 28 new best known singly-even binary self-dual codes of length 80 which have weight enumerator $W_{80}$ for
	\begin{enumerate}[label=]
	\item $\beta=0$ and $\alpha\in\{-z:z=65,\lb80,\lb120,\lb125,\lb130,\lb135,\lb140,\lb145,\lb150,\lb155,\lb165,\lb175,\lb190,\lb195,\lb205,\lb210,\lb215,\lb230,\lb235,\lb250,\lb270,\lb275,\lb280,\lb360\}$;
	\item $\beta=10$ and $\alpha\in\{-2z:z=130,\lb150,\lb160,\lb185\}$.
	\end{enumerate}
	
Of the 28 new codes, 19 are constructed by applying Theorem \ref{theorem-Omega_20_1} over $\FF_4$ (Table \ref{table-80-Omega_20_1-F4}); 4 are constructed by applying Theorem \ref{theorem-Omega_20_2} over $\FF_2+u\FF_2$ (Table \ref{table-80-Omega_20_2-F2u}) and 5 are constructed by applying Theorem \ref{theorem-Omega_20_2} over $\FF_4$ (Table \ref{table-80-Omega_20_2-F4}).

	\begin{table}[h!]
	\caption{New singly-even binary self-dual $[80,40,14]$ codes from Theorem \ref{theorem-Omega_20_1} over $\FF_4$.}\label{table-80-Omega_20_1-F4}
	\centering
	\begin{adjustbox}{max width=\textwidth}
	\footnotesize
	\begin{tabular}{ccccc}\midrule
	$\mathcal{C}_{80,i}$ & $\vct{v}$ & $\alpha$ & $\beta$ & $|\aut{\mathcal{C}_{80,i}}|$\\\midrule
	1 & \texttt{(31223333300320201200)} & $-275$ & $0$ & $2^{2}\cdot 5$\\
	2 & \texttt{(13111130203000233223)} & $-270$ & $0$ & $2^{2}\cdot 5$\\
	3 & \texttt{(00302012331122313103)} & $-250$ & $0$ & $2^{2}\cdot 5$\\
	4 & \texttt{(01332030221111113310)} & $-235$ & $0$ & $2^{2}\cdot 5$\\
	5 & \texttt{(23320213130330103221)} & $-230$ & $0$ & $2^{3}\cdot 5$\\
	6 & \texttt{(22011233231033013100)} & $-210$ & $0$ & $2^{2}\cdot 5$\\
	7 & \texttt{(11333122033223331212)} & $-205$ & $0$ & $2^{2}\cdot 5$\\
	8 & \texttt{(02222010112332220213)} & $-195$ & $0$ & $2^{2}\cdot 5$\\
	9 & \texttt{(30333313000233100021)} & $-190$ & $0$ & $2^{2}\cdot 5$\\
	10 & \texttt{(00111023231213313321)} & $-175$ & $0$ & $2^{2}\cdot 5$\\
	11 & \texttt{(22310231030332003032)} & $-165$ & $0$ & $2^{2}\cdot 5$\\
	12 & \texttt{(02002030232203221313)} & $-155$ & $0$ & $2^{2}\cdot 5$\\
	13 & \texttt{(03121003232002123332)} & $-150$ & $0$ & $2^{2}\cdot 5$\\
	14 & \texttt{(23200233120101002302)} & $-145$ & $0$ & $2^{2}\cdot 5$\\
	15 & \texttt{(22133232333121133232)} & $-140$ & $0$ & $2^{3}\cdot 5$\\
	16 & \texttt{(31031330230000203122)} & $-135$ & $0$ & $2^{2}\cdot 5$\\
	17 & \texttt{(01103022122003122122)} & $-130$ & $0$ & $2^{2}\cdot 5$\\
	18 & \texttt{(22010203131000112213)} & $-65$ & $0$ & $2^{2}\cdot 5$\\
	19 & \texttt{(02330020210322001303)} & $-260$ & $10$ & $2^{3}\cdot 5$\\\midrule
	\end{tabular}
	\end{adjustbox}
	\end{table}

	\begin{table}[h!]
	\caption{New singly-even binary self-dual $[80,40,14]$ codes from Theorem \ref{theorem-Omega_20_2} over $\FF_2+u\FF_2$.}\label{table-80-Omega_20_2-F2u}
	\centering
	\begin{adjustbox}{max width=\textwidth}
	\footnotesize
	\begin{tabular}{ccccc}\midrule
	$\mathcal{C}_{80,i}$ & $\vct{v}$ & $\alpha$ & $\beta$ & $|\aut{\mathcal{C}_{80,i}}|$\\\midrule
	20 & \texttt{(12222331200322021203)} & $-280$ & $0$ & $2^{3}\cdot 5$\\
	21 & \texttt{(23330310032021331010)} & $-120$ & $0$ & $2^{3}\cdot 5$\\
	22 & \texttt{(30320122023203322322)} & $-80$ & $0$ & $2^{3}\cdot 5$\\
	23 & \texttt{(21222311321120112303)} & $-320$ & $10$ & $2^{4}\cdot 5$\\\midrule
	\end{tabular}
	\end{adjustbox}
	\end{table}
	
	\begin{table}[h!]
	\caption{New singly-even binary self-dual $[80,40,14]$ codes from Theorem \ref{theorem-Omega_20_2} over $\FF_4$.}\label{table-80-Omega_20_2-F4}
	\centering
	\begin{adjustbox}{max width=\textwidth}
	\footnotesize
	\begin{tabular}{ccccc}\midrule
	$\mathcal{C}_{80,i}$ & $\vct{v}$ & $\alpha$ & $\beta$ & $|\aut{\mathcal{C}_{80,i}}|$\\\midrule
	24 & \texttt{(31211223330300232332)} & $-360$ & $0$ & $2^{2}\cdot 5$\\
	25 & \texttt{(10201301032322330300)} & $-215$ & $0$ & $2^{2}\cdot 5$\\
	26 & \texttt{(01021003132222203113)} & $-125$ & $0$ & $2^{2}\cdot 5$\\
	27 & \texttt{(31003000101110232322)} & $-370$ & $10$ & $2^{2}\cdot 5$\\
	28 & \texttt{(11210213102203230313)} & $-300$ & $10$ & $2^{2}\cdot 5$\\\midrule
	\end{tabular}
	\end{adjustbox}
	\end{table}

\subsection{New Self-Dual Codes of Length 84}

The possible weight enumerators of a binary self-dual $[84,42,14]$ code are given in \cite{R-044,R-085} as
	\begin{align*}
	W_{84,1}&=1+(4080-\alpha)x^{14}+39524x^{16}\\&\quad+(247264+14\alpha)x^{18}+\cdots,\\
	W_{84,2}&=1+(4080-\alpha)x^{14}+(28644+64\beta)x^{16}\\&\quad+(390368+14\alpha-384\beta)x^{18}+\cdots,\\
	W_{84,3}&=1+(4080-\alpha)x^{14}+(28644+64\beta)x^{16}\\&\quad+(394464+14\alpha-384\beta)x^{18}+\cdots,
	\end{align*}
where $\alpha,\beta\in\ZZ$. Previously known $(\alpha,\beta)$ values for weight enumerators $W_{84,1}$ and $W_{84,2}$ can be found online at \cite{wepd} (see \cite{R-058,R-085}). It is unknown whether or not a code with weight enumerator $W_{84,3}$ has been previously reported.

We obtain 107 new best known binary self-dual codes of length 84 which have weight enumerator $W_{84,3}$ for
	\begin{enumerate}[label=]
	\item $\beta=0$ and $\alpha\in\{6z:z=336,\lb350,\lb358,\lb365,\lb372,\lb386,\lb392,\lb393,\lb399,\lb400,\lb406,\lb407,\lb413,\lb414,\lb420,\lb421,\lb427,\lb428,\lb434,\lb435,\lb441,\lb442,\lb448,\lb449,\lb455,\lb456,\lb462,\lb463,\lb469,\lb470,\lb476,\lb477,\lb483,\lb484,\lb490,\lb491,\lb497,\lb498,\lb504,\lb505,\lb511,\lb512,\lb518,\lb519,\lb525,\lb526,\lb532,\lb533,\lb539,\lb540,\lb546,\lb553,\lb554,\lb560,\lb567\}$;
	\item $\beta=21$ and $\alpha\in\{6z:z=413,\lb434,\lb435,\lb441,\lb442,\lb449,\lb455,\lb456,\lb462,\lb463,\lb469,\lb470,\lb476,\lb477,\lb483,\lb484,\lb490,\lb491,\lb497,\lb498,\lb504,\lb505,\lb511,\lb512,\lb518,\lb519,\lb525,\lb526,\lb532,\lb533,\lb539,\lb540,\lb546,\lb547,\lb553,\lb560,\lb568,\lb575,\lb595\}$;
	\item $\beta=42$ and $\alpha\in\{6z:z=490,\lb512,\lb518,\lb525,\lb526,\lb539,\lb540,\lb547,\lb553,\lb560,\lb568\}$;
	\item $\beta=63$ and $\alpha\in\{6z:z=574,575\lb\}$.
	\end{enumerate}

Of the 107 new codes, 55 are constructed by applying Theorem \ref{theorem-Omega_42_1} over $\FF_2$ (Table \ref{table-84-Omega_42_1-F2}) and 52 are constructed by applying Theorem \ref{theorem-Omega_42_2} over $\FF_2$ (Table \ref{table-84-Omega_42_2-F2}). In Tables \ref{table-84-Omega_42_1-F2} and \ref{table-84-Omega_42_2-F2}, we only list 10 codes to save space. We refer to Database 2 of \cite{gmd} for the remaining unlisted codes.

	\begin{table}[h!]
	\caption{New binary self-dual $[84,42,14]$ codes from Theorem \ref{theorem-Omega_42_1} over $\FF_2$ (see Database 2 of \cite{gmd} for codes $\mathcal{C}_{84,11}$ to $\mathcal{C}_{84,55}$).}\label{table-84-Omega_42_1-F2}
	\centering
	\begin{adjustbox}{max width=\textwidth}
	\footnotesize
	\begin{tabular}{cccccc}\midrule
	$\mathcal{C}_{84,i}$ & $\vct{v}$ & $W_{84,j}$ & $\alpha$ & $\beta$ & $|\aut{\mathcal{C}_{84,i}}|$\\\midrule	
	1 & \texttt{(110001110100101111010000011100010000011111)} & $3$ & $2988$ & $0$ & $2\cdot 3\cdot 7$\\
	2 & \texttt{(111111011111011000011010010000101000001001)} & $3$ & $3024$ & $0$ & $2\cdot 3\cdot 7$\\
	3 & \texttt{(001001101100110111101011010000011100011010)} & $3$ & $3030$ & $0$ & $2\cdot 3\cdot 7$\\
	4 & \texttt{(101111111010011001101100101011000001001000)} & $3$ & $3066$ & $0$ & $2\cdot 3\cdot 7$\\
	5 & \texttt{(101001011101011110110100111111001011010100)} & $3$ & $3072$ & $0$ & $2\cdot 3\cdot 7$\\
	6 & \texttt{(111100010111001011001010011100110100001001)} & $3$ & $3108$ & $0$ & $2\cdot 3\cdot 7$\\
	7 & \texttt{(110101110100001100100000110101010010101111)} & $3$ & $3114$ & $0$ & $2\cdot 3\cdot 7$\\
	8 & \texttt{(000000000110000110110010101101100110111000)} & $3$ & $3150$ & $0$ & $2\cdot 3\cdot 7$\\
	9 & \texttt{(101010111001111011101001100100110100100000)} & $3$ & $3156$ & $0$ & $2\cdot 3\cdot 7$\\
	10 & \texttt{(101100110111001110010100000010110101111000)} & $3$ & $3192$ & $0$ & $2\cdot 3\cdot 7$\\\midrule
	\end{tabular}
	\end{adjustbox}
	\end{table}

	\begin{table}[h!]
	\caption{New binary self-dual $[84,42,14]$ codes from Theorem \ref{theorem-Omega_42_2} over $\FF_2$ (see Database 2 of \cite{gmd} for codes $\mathcal{C}_{84,66}$ to $\mathcal{C}_{84,107}$).}\label{table-84-Omega_42_2-F2}
	\centering
	\begin{adjustbox}{max width=\textwidth}
	\footnotesize
	\begin{tabular}{cccccc}\midrule
	$\mathcal{C}_{84,i}$ & $\vct{v}$ & $W_{84,j}$ & $\alpha$ & $\beta$ & $|\aut{\mathcal{C}_{84,i}}|$\\\midrule
	56 & \texttt{(011001100101000010101000000000011110111100)} & $3$ & $2016$ & $0$ & $2^{2}\cdot 3\cdot 7$\\
	57 & \texttt{(100101010001111110100110011011000001011001)} & $3$ & $2100$ & $0$ & $2\cdot 3\cdot 7$\\
	58 & \texttt{(010110001001010100011100001111000111011011)} & $3$ & $2148$ & $0$ & $2\cdot 3\cdot 7$\\
	59 & \texttt{(010101100111000010011001000001000000000001)} & $3$ & $2190$ & $0$ & $2\cdot 3\cdot 7$\\
	60 & \texttt{(101110110000001010001011111001000000000101)} & $3$ & $2232$ & $0$ & $2\cdot 3\cdot 7$\\
	61 & \texttt{(001000100010110011001101111011001001111100)} & $3$ & $2316$ & $0$ & $2\cdot 3\cdot 7$\\
	62 & \texttt{(010010101101010100100111001011011001110001)} & $3$ & $2352$ & $0$ & $2\cdot 3\cdot 7$\\
	63 & \texttt{(001101000100110000001101011011011011110011)} & $3$ & $2358$ & $0$ & $2\cdot 3\cdot 7$\\
	64 & \texttt{(000011000001100101110100001010111101110111)} & $3$ & $2394$ & $0$ & $2\cdot 3\cdot 7$\\
	65 & \texttt{(011101100100110011000111001110111101000000)} & $3$ & $2400$ & $0$ & $2\cdot 3\cdot 7$\\\midrule
	\end{tabular}
	\end{adjustbox}
	\end{table}

\subsection{New Self-Dual Codes of Length 96}

The possible weight enumerators of a singly-even binary self-dual $[96,48,16]$ code are given in \cite{R-059} as
	\begin{align*}
	W_{96,1}^{\text{I}}&=1+(\alpha-5814)x^{16}+(97280+64\beta)x^{18}\\&\quad+(1784320-16\alpha-384\beta)x^{20}\\&\quad+(17626112+192\beta)x^{22}+\cdots,\\
	W_{96,2}^{\text{I}}&=1+(\alpha-5814)x^{16}+(97280+64\beta)x^{18}\\&\quad+(1694208-16\alpha-384\beta+4096\gamma)x^{20}\\&\quad+(18969600+192\beta-49152\gamma)x^{22}+\cdots,
	\end{align*}
where $\alpha,\beta,\gamma\in\ZZ$. Previously known $(\alpha,\beta,\gamma)$ values for weight enumerators $W_{96,1}^{\text{I}}$ and $W_{96,2}^{\text{I}}$ can be found online at \cite{wepd} (see \cite{R-137,R-059}).

We obtain 105 new best known singly-even binary self-dual codes of length 96 which have weight enumerator $W_{96,2}^{\text{I}}$ for
	\begin{enumerate}[label=]
	\item $\gamma=0$ and $(\alpha,\beta)\in\{(12z_1,-4z_2):(z_1,z_2)=(850,\lb0),\lb(896,\lb0),\lb(904,\lb0),\lb(805,\lb1),\lb(854,\lb3),\lb(808,\lb4),\lb(837,\lb6),\lb(926,\lb6),\lb(822,\lb7),\lb(865,\lb9),\lb(860,\lb10),\lb(860,\lb12),\lb(897,\lb12),\lb(900,\lb12),\lb(929,\lb12),\lb(1014,\lb12),\lb(877,\lb13),\lb(910,\lb15),\lb(877,\lb16),\lb(933,\lb18),\lb(908,\lb19),\lb(938,\lb21),\lb(952,\lb22),\lb(957,\lb24),\lb(990,\lb24),\lb(965,\lb25),\lb(1003,\lb27),\lb(947,\lb28),\lb(1038,\lb30),\lb(1052,\lb31),\lb(971,\lb34),\lb(1045,\lb36),\lb(1222,\lb36),\lb(1148,\lb46),\lb(1244,\lb48),\lb(1260,\lb48),\lb(1204,\lb52),\lb(1278,\lb60)\}$;
	\item $\gamma=6$ and $(\alpha,\beta)\in\{(12z_1,-4z_2):(z_1,z_2)=(909,\lb30),\lb(913,\lb31),\lb(922,\lb33),\lb(901,\lb34),\lb(902,\lb36),\lb(918,\lb37),\lb(944,\lb39),\lb(948,\lb40),\lb(932,\lb42),\lb(995,\lb43),\lb(949,\lb45),\lb(980,\lb46),\lb(1034,\lb48),\lb(1018,\lb49),\lb(969,\lb51),\lb(978,\lb52),\lb(1120,\lb64)\}$;
	\item $\gamma=12$ and $(\alpha,\beta)\in\{(12z_1,-4z_2):(z_1,z_2)=(928,\lb60),\lb(988,\lb60),\lb(992,\lb60),\lb(1048,\lb60),\lb(1056,\lb60),\lb(1076,\lb60),\lb(1096,\lb60),\lb(1104,\lb60),\lb(1120,\lb60),\lb(1148,\lb60),\lb(1160,\lb60),\lb(1168,\lb60),\lb(1176,\lb60),\lb(1208,\lb60),\lb(1216,\lb60),\lb(1232,\lb60),\lb(1240,\lb60),\lb(1264,\lb60),\lb(1280,\lb60),\lb(1288,\lb60),\lb(1320,\lb60),\lb(1336,\lb60),\lb(1520,\lb60),\lb(982,\lb61),\lb(975,\lb63),\lb(984,\lb64),\lb(997,\lb66),\lb(1133,\lb66),\lb(1148,\lb66),\lb(1236,\lb66),\lb(977,\lb67),\lb(1075,\lb69),\lb(1042,\lb70),\lb(1080,\lb72),\lb(1112,\lb72),\lb(1120,\lb72),\lb(1137,\lb72),\lb(1272,\lb72),\lb(1544,\lb72),\lb(1036,\lb73),\lb(1046,\lb76),\lb(1098,\lb78),\lb(1121,\lb78),\lb(1072,\lb79),\lb(1226,\lb84),\lb(1352,\lb84),\lb(1528,\lb84),\lb(1224,\lb85),\lb(1332,\lb100),\lb(1384,\lb108)\}$.
	\end{enumerate}

Of the 105 new codes, 5 are constructed by applying Theorem \ref{theorem-Omega_24_1} over $\FF_2+u\FF_2$ (Table \ref{table-96(I)-Omega_24_1-F2u}); 56 are constructed by applying Theorem \ref{theorem-Omega_24_1} over $\FF_4$ (Table \ref{table-96(I)-Omega_24_1-F4}); 29 are constructed by applying Theorem \ref{theorem-Omega_24_2} over $\FF_2+u\FF_2$ (Table \ref{table-96(I)-Omega_24_2-F2u}) and 15 are constructed by applying Theorem \ref{theorem-Omega_24_3} over $\FF_2+u\FF_2$ (Table \ref{table-96(I)-Omega_24_3-F2u}). In Tables \ref{table-96(I)-Omega_24_1-F4} and \ref{table-96(I)-Omega_24_2-F2u}, we only list 10 codes to save space. We refer to Database 3 of \cite{gmd} for the remaining unlisted codes.

	\begin{table}[h!]
	\caption{New singly-even binary self-dual $[96,48,16]$ codes from Theorem \ref{theorem-Omega_24_1} over $\FF_2+u\FF_2$.}\label{table-96(I)-Omega_24_1-F2u}
	\centering
	\begin{adjustbox}{max width=\textwidth}
	\footnotesize
	\begin{tabular}{cccccc}\midrule
	$\mathcal{C}_{96,i}^{\text{I}}$ & $\vct{v}$ & $\alpha$ & $\beta$ & $\gamma$ & $|\aut{\mathcal{C}_{96,i}^{\text{I}}}|$\\\midrule
	1 & \texttt{(021111013112231302031321)} & $15336$ & $-240$ & $0$ & $2^{4}\cdot 3$\\
	2 & \texttt{(332030221021223333303031)} & $14664$ & $-144$ & $0$ & $2^{4}\cdot 3$\\
	3 & \texttt{(310201300213103023131203)} & $12456$ & $-120$ & $0$ & $2^{4}\cdot 3$\\
	4 & \texttt{(110330330331133112022003)} & $16608$ & $-432$ & $12$ & $2^{6}\cdot 3$\\
	5 & \texttt{(301201202300231031203031)} & $14712$ & $-336$ & $12$ & $2^{5}\cdot 3$\\\midrule
	\end{tabular}
	\end{adjustbox}
	\end{table}
	
	\begin{table}[h!]
	\caption{New singly-even binary self-dual $[96,48,16]$ codes from Theorem \ref{theorem-Omega_24_1} over $\FF_4$ (see Database 3 of \cite{gmd} for codes $\mathcal{C}_{96,16}^{\text{I}}$ to $\mathcal{C}_{96,61}^{\text{I}}$).}\label{table-96(I)-Omega_24_1-F4}
	\centering
	\begin{adjustbox}{max width=\textwidth}
	\footnotesize
	\begin{tabular}{cccccc}\midrule
	$\mathcal{C}_{96,i}^{\text{I}}$ & $\vct{v}$ & $\alpha$ & $\beta$ & $\gamma$ &  $|\aut{\mathcal{C}_{96,i}^{\text{I}}}|$\\\midrule
	6 & \texttt{(301220102333222223210331)} & $14448$ & $-208$ & $0$ & $2^{4}\cdot 3$\\
	7 & \texttt{(111322103200321233201211)} & $13776$ & $-184$ & $0$ & $2^{4}\cdot 3$\\
	8 & \texttt{(333110012302102113330110)} & $11652$ & $-136$ & $0$ & $2^{4}\cdot 3$\\
	9 & \texttt{(321212110001220122211301)} & $12624$ & $-124$ & $0$ & $2^{3}\cdot 3$\\
	10 & \texttt{(000232332103103311032121)} & $11364$ & $-112$ & $0$ & $2^{3}\cdot 3$\\
	11 & \texttt{(231232002131031220200120)} & $12036$ & $-108$ & $0$ & $2^{3}\cdot 3$\\
	12 & \texttt{(021301113010112220211130)} & $11580$ & $-100$ & $0$ & $2^{3}\cdot 3$\\
	13 & \texttt{(000332130323021220110022)} & $11880$ & $-96$ & $0$ & $2^{3}\cdot 3$\\
	14 & \texttt{(001022122300133130333310)} & $11424$ & $-88$ & $0$ & $2^{3}\cdot 3$\\
	15 & \texttt{(213121322231133130230323)} & $11256$ & $-84$ & $0$ & $2^{3}\cdot 3$\\\midrule
	\end{tabular}
	\end{adjustbox}
	\end{table}
	
	\begin{table}[h!]
	\caption{New singly-even binary self-dual $[96,48,16]$ codes from Theorem \ref{theorem-Omega_24_2} over $\FF_2+u\FF_2$ (see Database 3 of \cite{gmd} for codes $\mathcal{C}_{96,72}^{\text{I}}$ to $\mathcal{C}_{96,90}^{\text{I}}$).}\label{table-96(I)-Omega_24_2-F2u}
	\centering
	\begin{adjustbox}{max width=\textwidth}
	\footnotesize
	\begin{tabular}{ccccccc}\midrule
	$\mathcal{C}_{96,i}^{\text{I}}$ & $\vct{v}$ & $\alpha$ & $\beta$ & $\gamma$ &  $|\aut{\mathcal{C}_{96,i}^{\text{I}}}|$\\\midrule
	62 & \texttt{(222222222220220133213123)} & $14928$ & $-192$ & $0$ & $2^{6}\cdot 3$\\
	63 & \texttt{(222222222220220133211121)} & $15120$ & $-192$ & $0$ & $2^{6}\cdot 3$\\
	64 & \texttt{(222220222011020210021113)} & $12540$ & $-144$ & $0$ & $2^{4}\cdot 3$\\
	65 & \texttt{(222222222011021013011303)} & $11484$ & $-96$ & $0$ & $2^{4}\cdot 3$\\
	66 & \texttt{(222222222011202110211131)} & $10764$ & $-48$ & $0$ & $2^{4}\cdot 3$\\
	67 & \texttt{(222222222101200131212230)} & $10800$ & $-48$ & $0$ & $2^{5}\cdot 3$\\
	68 & \texttt{(222222222011202110213111)} & $11148$ & $-48$ & $0$ & $2^{4}\cdot 3$\\
	69 & \texttt{(222222220103021223012121)} & $12168$ & $-48$ & $0$ & $2^{4}\cdot 3$\\
	70 & \texttt{(222222222101200131221023)} & $10752$ & $0$ & $0$ & $2^{5}\cdot 3$\\
	71 & \texttt{(222222202121200111221203)} & $10848$ & $0$ & $0$ & $2^{5}\cdot 3$\\\midrule
	\end{tabular}
	\end{adjustbox}
	\end{table}
	
	\begin{table}[h!]
	\caption{New singly-even binary self-dual $[96,48,16]$ codes from Theorem \ref{theorem-Omega_24_3} over $\FF_2+u\FF_2$.}\label{table-96(I)-Omega_24_3-F2u}
	\centering
	\begin{adjustbox}{max width=\textwidth}
	\footnotesize
	\begin{tabular}{ccccccc}\midrule
	$\mathcal{C}_{96,i}^{\text{I}}$ & $\vct{v}$ & $\alpha$ & $\beta$ & $\gamma$ & $|\aut{\mathcal{C}_{96,i}^{\text{I}}}|$\\\midrule
	91 & \texttt{(222220222111201001210311)} & $11112$ & $-24$ & $0$ & $2^{4}\cdot 3$\\
	92 & \texttt{(222222202111001223010313)} & $16224$ & $-336$ & $12$ & $2^{6}\cdot 3$\\
	93 & \texttt{(222222222011101333122333)} & $18336$ & $-336$ & $12$ & $2^{5}\cdot 3$\\
	94 & \texttt{(222222222101220113212010)} & $15264$ & $-288$ & $12$ & $2^{5}\cdot 3$\\
	95 & \texttt{(222220222111221003212111)} & $18528$ & $-288$ & $12$ & $2^{6}\cdot 3$\\
	96 & \texttt{(222220220101211331210113)} & $14832$ & $-264$ & $12$ & $2^{4}\cdot 3$\\
	97 & \texttt{(222220200103211331212113)} & $13776$ & $-240$ & $12$ & $2^{4}\cdot 3$\\
	98 & \texttt{(222220222111221003212313)} & $13920$ & $-240$ & $12$ & $2^{5}\cdot 3$\\
	99 & \texttt{(222222220021202121211101)} & $14496$ & $-240$ & $12$ & $2^{6}\cdot 3$\\
	100 & \texttt{(222222222113212131201003)} & $14592$ & $-240$ & $12$ & $2^{5}\cdot 3$\\
	101 & \texttt{(222222222011212313201101)} & $14784$ & $-240$ & $12$ & $2^{5}\cdot 3$\\
	102 & \texttt{(222222220021020101011121)} & $14880$ & $-240$ & $12$ & $2^{5}\cdot 3$\\
	103 & \texttt{(222222222113212333201003)} & $15360$ & $-240$ & $12$ & $2^{5}\cdot 3$\\
	104 & \texttt{(222222222011011111002013)} & $15456$ & $-240$ & $12$ & $2^{5}\cdot 3$\\
	105 & \texttt{(222222222211020101021321)} & $16032$ & $-240$ & $12$ & $2^{5}\cdot 3$\\\midrule
	\end{tabular}
	\end{adjustbox}
	\end{table}

The weight enumerator of a doubly-even binary self-dual $[96,48,16]$ code is given in \cite{R-059} as
	\begin{align*}
	W_{96}^{\text{II}}&=1+\alpha x^{16}+(3217056-16\alpha)x^{20}+\cdots,
	\end{align*}
where $\alpha\in\ZZ$. Previously known $\alpha$ values for weight enumerator $W_{96}^{\text{II}}$ can be found online at \cite{wepd} (see \cite{R-177,R-044,R-170,R-174,R-048,R-172,R-126,R-059}).

We obtain 121 new best known doubly-even binary self-dual codes of length 96 which have weight enumerator $W_{96}^{\text{II}}$ for
	\begin{enumerate}[label=]
	\item $\alpha\in\{6z:z=1379,\lb1403,\lb1419,\lb1443,\lb1459,\lb1473,\lb1499,\lb1507,\lb1523,\lb1539,\lb1547,\lb1563,\lb1579,\lb1603,\lb1619,\lb1627,\lb1643,\lb1659,\lb1667,\lb1683,\lb1699,\lb1707,\lb1723,\lb1747,\lb1759,\lb1763,\lb1779,\lb1787,\lb1795,\lb1803,\lb1811,\lb1819,\lb1827,\lb1835,\lb1843,\lb1851,\lb1859,\lb1867,\lb1875,\lb1879,\lb1883,\lb1891,\lb1899,\lb1903,\lb1907,\lb1913,\lb1915,\lb1921,\lb1923,\lb1931,\lb1939,\lb1947,\lb1957,\lb1963,\lb1971,\lb1975,\lb1979,\lb1987,\lb1995,\lb2003,\lb2007,\lb2011,\lb2015,\lb2019,\lb2023,\lb2027,\lb2031,\lb2039,\lb2043,\lb2055,\lb2059,\lb2067,\lb2071,\lb2079,\lb2083,\lb2087,\lb2091,\lb2095,\lb2103,\lb2107,\lb2119,\lb2127,\lb2135,\lb2143,\lb2147,\lb2151,\lb2163,\lb2167,\lb2175,\lb2195,\lb2199,\lb2203,\lb2207,\lb2211,\lb2215,\lb2223,\lb2231,\lb2247,\lb2255,\lb2259,\lb2263,\lb2279,\lb2283,\lb2295,\lb2311,\lb2359,\lb2379,\lb2407,\lb2423,\lb2471,\lb2483,\lb2503,\lb2519,\lb2567,\lb2599,\lb2663,\lb2695,\lb2711,\lb2759,\lb2887,\lb4751\}$.
	\end{enumerate}

Of the 121 new codes, 88 are constructed by applying Theorem \ref{theorem-Omega_24_1} over $\FF_2+u\FF_2$ (Table \ref{table-96(II)-Omega_24_1-F2u}); 8 are constructed by applying Theorem \ref{theorem-Omega_24_1} over $\FF_4$ (Table \ref{table-96(II)-Omega_24_1-F4}); 13 are constructed by applying Theorem \ref{theorem-Omega_24_2} over $\FF_2+u\FF_2$ (Table \ref{table-96(II)-Omega_24_2-F2u}) and 12 are constructed by applying Theorem \ref{theorem-Omega_24_3} over $\FF_2+u\FF_2$ (Table \ref{table-96(II)-Omega_24_3-F2u}). In Table \ref{table-96(II)-Omega_24_1-F2u}, we only list 10 codes to save space. We refer to Database 4 of \cite{gmd} for the remaining unlisted codes.

	\begin{table}[h!]
	\caption{New doubly-even binary self-dual $[96,48,16]$ codes from Theorem \ref{theorem-Omega_24_1} over $\FF_2+u\FF_2$ (see Database 4 of \cite{gmd} for codes $\mathcal{C}_{96,11}^{\text{II}}$ to $\mathcal{C}_{96,88}^{\text{II}}$).}\label{table-96(II)-Omega_24_1-F2u}
	\centering
	\begin{adjustbox}{max width=\textwidth}
	\footnotesize
	\begin{tabular}{cccc}\midrule
	$\mathcal{C}_{96,i}^{\text{II}}$ & $\vct{v}$ & $\alpha$ & $|\aut{\mathcal{C}_{96,i}^{\text{II}}}|$\\\midrule
	1 & \texttt{(320210300223213323022021)} & $8514$ & $2^{4}\cdot 3$\\
	2 & \texttt{(122313111112022110302021)} & $8754$ & $2^{4}\cdot 3$\\
	3 & \texttt{(122123010133300221011031)} & $8994$ & $2^{4}\cdot 3$\\
	4 & \texttt{(001212011312020203212003)} & $9042$ & $2^{4}\cdot 3$\\
	5 & \texttt{(122000032021320000301313)} & $9138$ & $2^{4}\cdot 3$\\
	6 & \texttt{(010220032021103212312322)} & $9234$ & $2^{4}\cdot 3$\\
	7 & \texttt{(210231130330223123221020)} & $9282$ & $2^{4}\cdot 3$\\
	8 & \texttt{(032311303332300120032321)} & $9378$ & $2^{4}\cdot 3$\\
	9 & \texttt{(213201111011203112303130)} & $9474$ & $2^{4}\cdot 3$\\
	10 & \texttt{(110230310113303323101232)} & $9618$ & $2^{4}\cdot 3$\\\midrule
	\end{tabular}
	\end{adjustbox}
	\end{table}

	\begin{table}[h!]
	\caption{New doubly-even binary self-dual $[96,48,16]$ codes from Theorem \ref{theorem-Omega_24_1} over $\FF_4$.}\label{table-96(II)-Omega_24_1-F4}
	\centering
	\begin{adjustbox}{max width=\textwidth}
	\footnotesize
	\begin{tabular}{cccc}\midrule
	$\mathcal{C}_{96,i}^{\text{II}}$ & $\vct{v}$ & $\alpha$ & $|\aut{\mathcal{C}_{96,i}^{\text{II}}}|$\\\midrule
	89 & \texttt{(332010230212013330233103)} & $8274$ & $2^{3}\cdot 3$\\
	90 & \texttt{(121001211131002223313030)} & $8418$ & $2^{3}\cdot 3$\\
	91 & \texttt{(330222312102031223221213)} & $8658$ & $2^{3}\cdot 3$\\
	92 & \texttt{(331001322120111003113202)} & $8838$ & $2^{3}\cdot 3$\\
	93 & \texttt{(322112032202123203331221)} & $11478$ & $2^{3}\cdot 3$\\
	94 & \texttt{(333003302201123232100313)} & $11526$ & $2^{3}\cdot 3$\\
	95 & \texttt{(201120113100000113122122)} & $11742$ & $2^{3}\cdot 3$\\
	96 & \texttt{(000232210010130121123202)} & $13194$ & $2^{5}\cdot 3$\\\midrule
	\end{tabular}
	\end{adjustbox}
	\end{table}
	
	\begin{table}[h!]
	\caption{New doubly-even binary self-dual $[96,48,16]$ codes from Theorem \ref{theorem-Omega_24_2} over $\FF_2+u\FF_2$.}\label{table-96(II)-Omega_24_2-F2u}
	\centering
	\begin{adjustbox}{max width=\textwidth}
	\footnotesize
	\begin{tabular}{cccc}\midrule
	$\mathcal{C}_{96,i}^{\text{II}}$ & $\vct{v}$ & $\alpha$ & $|\aut{\mathcal{C}_{96,i}^{\text{II}}}|$\\\midrule
	97 & \texttt{(222222220103200133210030)} & $10002$ & $2^{4}\cdot 3$\\
	98 & \texttt{(222222220103021003012303)} & $10098$ & $2^{4}\cdot 3$\\
	99 & \texttt{(222222220103200133212032)} & $10578$ & $2^{4}\cdot 3$\\
	100 & \texttt{(222222220103021003010123)} & $10818$ & $2^{4}\cdot 3$\\
	101 & \texttt{(222222220103221203210101)} & $10866$ & $2^{4}\cdot 3$\\
	102 & \texttt{(222220202013221331211111)} & $12138$ & $2^{5}\cdot 3$\\
	103 & \texttt{(222222222101122211113131)} & $12234$ & $2^{6}\cdot 3$\\
	104 & \texttt{(222222222101020131001221)} & $12522$ & $2^{6}\cdot 3$\\
	105 & \texttt{(222222220103200113212230)} & $12546$ & $2^{4}\cdot 3$\\
	106 & \texttt{(222222222101201021210101)} & $12810$ & $2^{6}\cdot 3$\\
	107 & \texttt{(222220222211020212001111)} & $13290$ & $2^{6}\cdot 3$\\
	108 & \texttt{(222222202013220110213131)} & $13578$ & $2^{5}\cdot 3$\\
	109 & \texttt{(222222222220222111213123)} & $28506$ & $2^{8}\cdot 3\cdot 5$\\\midrule
	\end{tabular}
	\end{adjustbox}
	\end{table}
	
	\begin{table}[h!]
	\caption{New doubly-even binary self-dual $[96,48,16]$ codes from Theorem \ref{theorem-Omega_24_3} over $\FF_2+u\FF_2$.}\label{table-96(II)-Omega_24_3-F2u}
	\centering
	\begin{adjustbox}{max width=\textwidth}
	\footnotesize
	\begin{tabular}{cccc}\midrule
	$\mathcal{C}_{96,i}^{\text{II}}$ & $\vct{v}$ & $\alpha$ & $|\aut{\mathcal{C}_{96,i}^{\text{II}}}|$\\\midrule
	110 & \texttt{(222220200103011331010113)} & $12186$ & $2^{4}\cdot 3$\\
	111 & \texttt{(222222222011202121201123)} & $12426$ & $2^{5}\cdot 3$\\
	112 & \texttt{(222222222011211111220213)} & $12714$ & $2^{5}\cdot 3$\\
	113 & \texttt{(222220220101011331012113)} & $12762$ & $2^{4}\cdot 3$\\
	114 & \texttt{(222222222011212313221303)} & $13002$ & $2^{5}\cdot 3$\\
	115 & \texttt{(222020200101011331012133)} & $13050$ & $2^{4}\cdot 3$\\
	116 & \texttt{(222220220101011331012133)} & $13338$ & $2^{4}\cdot 3$\\
	117 & \texttt{(222222220211121333100113)} & $13866$ & $2^{6}\cdot 3$\\
	118 & \texttt{(222222220211121333100131)} & $14826$ & $2^{6}\cdot 3$\\
	119 & \texttt{(222222222011211311220213)} & $15978$ & $2^{6}\cdot 3$\\
	120 & \texttt{(222222222011121333100333)} & $16170$ & $2^{5}\cdot 3$\\
	121 & \texttt{(222222222011121333100311)} & $16554$ & $2^{5}\cdot 3$\\\midrule
	\end{tabular}
	\end{adjustbox}
	\end{table}

\section{Conclusion}

In this work, we applied the idea of composite matrices $\Omega(v)$ to derive a number of techniques assuming a generator matrix of the form $(I_n\,|\,\Omega(v))$ to construct new binary self-dual codes. We defined each of the composite matrices that were implemented in the techniques and we proved the necessary conditions required by the techniques to produce self-dual codes. We applied these techniques directly over $\FF_2$ as well as over the rings $\FF_2+u\FF_2$ and $\FF_4$. By so doing, we were able to construct new best known binary self-dual codes with many different weight enumerator parameter values. In particular, we constructed 28 singly-even $[80,40,14]$ codes, 107 $[84,42,14]$ codes, 105 singly-even $[96,48,16]$ codes and 121 doubly-even $[96,48,16]$ codes.

The advantage of using composite matrices is that there are many different combinations of their determining parameters, i.e. the groups $G$ and $\{H_1,H_2,\ldots,H_{\eta}\}$ and the parameter matrices $H'$ and $P'$. This allows for many different forms of the matrices $\Omega(v)$ which often have very unusual structures. For each of the composite matrices we defined, we assumed that $H'=\vct{1}$ and $P'=\vct{1}$. A suggestion for future work could be to investigate different choices for both $H'$ and $P'$. Another suggestion would be to use composite matrices determined by groups $G$ and $\{H_1,H_2,\ldots,H_{\eta}\}$ of different orders. We could also investigate applying composite matrices over rings other than those used in this work.

\bibliographystyle{plainnat}
\bibliography{paper3}
\end{document}